\documentclass{mfat}
\pagespan{48}{61}





\newtheorem{thm}{Theorem}[section]

\newtheorem{lem}[thm]{Lemma}
\newtheorem{rem}[thm]{Remark}

\newtheorem{assumption}[thm]{Assumption}

\numberwithin{equation}{section}

\begin{document}

\title[On the finiteness of the discrete spectrum of a $3 \times 3$ operator matrix]
      {On the finiteness of the discrete spectrum of a $3 \times 3$ operator matrix}

\author[Tulkin H. Rasulov]{Tulkin H. Rasulov}
\address{Faculty of Physics and Mathematics, Bukhara State University, 11 M. Ikbol
str., Bukhara, 200100, Uzbekistan}
\email{rth@mail.ru}

\subjclass[2000]{Primary 81Q10; Secondary 35P20, 47N50.}
\date{01/02/2014}
\keywords{Operator matrix, bosonic Fock
space, annihilation and creation operators, generalized Friedrichs model,
essential and discrete spectra, Weinberg equation, continuity in the
uniform operator topology.}

\begin{abstract}
  An operator matrix $H$ associated with a lattice system describing
  three particles in interactions, without conservation of the number
  of particles, is considered.  The structure of the essential spectrum
  of $H$ is described by the spectra of two families of the
  generalized Friedrichs models.  A symmetric version of the Weinberg
  equation for eigenvectors of $H$ is obtained. The conditions which
  guarantee the finiteness of the number of discrete eigenvalues
  located  below the bottom of the three-particle branch of
  the essential spectrum of $H$ is found.
\end{abstract}

\maketitle

\section{Introduction}

One of important problems in the spectral theory of Schr\"{o}dinger
operators and Hamiltonians (operator matrices) in a Fock space is to
study the number of eigenvalues (bound states) located outside the
essential spectrum. The first mathematical result on the finiteness of
the discrete spectrum of Schr\"{o}dinger operators for general
interactions was obtained by Uchiyama in \cite{Uch69}. Under natural
assumptions on the potential, the essential spectrum of the continuous
Schr\"{o}dinger operator $H_{\rm c}$ of a system of three pair-wise
interacting particles coincides with the half-axis $[\kappa; \infty),$
$\kappa \leq 0.$ In independent investigations of Yafaev \cite{Yaf72}
and Zhislin \cite{Zhis72}, it was shown that for $\kappa<0$ and a
sufficiently rapid decrease of the interactions in the coordinate
space representation the discrete spectrum of $H_{\rm c}$ is actually
finite. In the case $\kappa=0$ the finiteness of the discrete spectrum
of $H_{\rm c}$ with certain decreasing interactions was established by
Yafaev \cite{Yaf75}. Yafaev's results are based on the investigation
of the Faddeev and Weinberg type system of integral equations for the
resolvent.

The problem of finiteness of the number of eigenvalues of the
three-particle discrete Schr\"{o}dinger operators $H_{\rm d}$ was
studied by many authors, see for example, \cite{AL, LM, M08}.  The
authors of \cite{AL} used the Faddeev and Weinberg type equations and
an expansion of the Fredholm determinant to prove finiteness of the
discrete spectrum of $H_{\rm d}$ with pair contact interactions when
the corresponding two-particle discrete Schr\"{o}dinger operators have
no virtual levels. The Birman-Schwinger principle was used in
\cite{LM} to prove that the discrete spectrum of the operator $H_{\rm
  d}$ describing systems of three particles (two bosons and a third
particle of a different nature) is finite.  In \cite{LS}, applying the
methods developed in \cite{Yaf75} to the Hamiltonian $H_{\rm d}$ of a
system of three arbitrary particles on a lattice, finiteness of the
discrete spectrum of $H_{\rm d}$ is proved if either only one or none
of the two-particle subsystems has a virtual level.  In \cite{M08},
the finiteness of the number of eigenvalues of $H_{\rm d}$ with a
specific class of potentials is proved where one of the particles has
an infinite mass.

In all of the above mentioned papers devoted to the finiteness of the
discrete spectrum, it was considered systems with a fixed number of
quasi-particles.  It is worth to mention that there are important
problems in the theory of solid-state physics \cite{Mog}, quantum
field theory \cite{Frid}, statistical physics \cite{Mal-Min, MS},
fluid mechanics \cite{Cha61}, magnetohydrodynamics \cite{Lif89} and
quantum mechanics \cite{Tha92} where the number of quasi-particles is
finite but not fixed. Recall that the study of systems describing $n$
particles in interaction without conservation of the number of
particles can be reduced to the investigation of the spectral
properties of self-adjoint operators acting in the $n$-particle cut
subspace of the Fock space \cite{Frid, MS, Mog, SSZ}. In \cite{SSZ},
geometric and commutator techniques were developed in order to find
the location of the spectrum and to prove absence of singular
continuous spectrum for Hamiltonians without the conservation of
particle number.

In the present paper we consider an operator matrix $H$ associated
with the lattice system describing three particles in interactions
without conservation of the number of particles. This operator acts in
a three-particle subspace ${\mathcal H}$ of the bosonic Fock space and
it is a lattice analogue of the spin-boson Hamiltonian \cite{MS}.  We
find sufficient conditions for the finiteness of the discrete spectrum
of $H.$ Note that the operator matrix $H$ has been considered before
in \cite{MR, Ras08, Ras10-1, YoM} where only its essential spectrum
was investigated.

The organization of the present paper is as follows. Section 1 is
an introduction to the whole work. In Section 2, the operator matrix
$H$ is described as a bounded self-adjoint operator in ${\mathcal H}$
and the main results are formulated. In Section 3, we prove some auxiliary lemmas.
In Section 4, we obtain a symmetric version of the Weinberg equation
for eigenvectors of $H.$ Section 5 is devoted to the proof of the
main results.

\section{The operator matrix and main results}

\subsection{The operator matrix}
Let ${\Bbb C},$ ${\Bbb R}$ and ${\Bbb Z}$ be the set of all complex,
real and integer numbers, respectively. We denote by ${\Bbb T}^3$ the
three-dimensional torus (the first Brillouin zone, i.e., the dual
group of ${\Bbb Z}^3$), the cube $(-\pi,\pi]^3$ with appropriately
identified sides is equipped with its Haar measure. The torus ${\Bbb
  T}^3$ will always be considered as an Abelian group with respect to
the addition and multiplication by real numbers regarded as operations
on the three-dimensional space ${\Bbb R}^3$ modulo $(2 \pi {\Bbb
  Z})^3.$

Let $ L_2({\Bbb T}^3)$
be the Hilbert space of square integrable (complex) functions
defined on ${\Bbb T}^3$ and $ L_2^{\rm s} (({\Bbb T}^3)^2)$ be the
Hilbert space of square integrable (complex) symmetric functions
defined on $({\Bbb T}^3)^2.$ Denote by ${\mathcal H}$ the direct
sum of spaces ${\mathcal H}_1={\Bbb C},$ ${\mathcal H}_1=L_2({\Bbb
  T}^3)$ and ${\mathcal H}_2=L_2^{\rm s}(({\Bbb T}^3)^2),$ that is,
${\mathcal H}={\mathcal H}_0 \oplus {\mathcal H}_1 \oplus
{\mathcal H}_2.$

Let us consider the operator matrix (Hamiltonian) $H$ acting in
the Hilbert space ${\mathcal H}$ as
$$
H=\left( \begin{array}{ccc}
    H_{00} & H_{01} & 0\\
    H_{01}^* & H_{11} & H_{12}\\
    0 & H_{12}^* & H_{22}\\
  \end{array}
\right),
$$
where the entries $H_{ij}: {\mathcal H}_j \to {\mathcal H}_i,$ $i \leq j,$
$i,j=0,1,2$ are defined by
$$
H_{00}f_0=w_0 f_0,\quad H_{01}f_1=\int_{{\Bbb T}^3}
v_0(s)f_1(s)\,ds, \quad (H_{11}f_1)(p)=w_1(p)f_1(p),
$$
$$
(H_{12}f_2)(p)= \int_{{\Bbb T}^3} v_1(s) f_2(p,s)\,ds, \quad
H_{22}=H_{22}^0-V, \quad (H_{22}^0f_2)(p,q)=w_2(p,q)f_2(p,q),
$$
$$
(Vf_2)(p,q)=v_2(q) \int_{{\Bbb T}^3} v_2(s)
f_2(p,s)\,ds+v_2(p) \int_{{\Bbb T}^3} v_2(s) f_2(s,q)\,ds.
$$

Here $f_i \in {\mathcal H}_i,$ $i=0,1,2;$ $w_0$ is a fixed real
number, $w_1(\cdot)$ and $v_i(\cdot),$ $i=0,1,2$ are real-valued
continuous functions on ${\Bbb T}^3,$ the function
$w_2(\cdot,\cdot)$ is a real-valued continuous symmetric function
on $({\Bbb T}^2)^2.$
The operator $H_{ij}^*$ $(i<j)$ denotes the adjoint to $H_{ij}$
and
$$
(H_{01}^* f_0)(p)=v_0(p)f_0, \quad (H_{12}^*
f_1)(p,q)=\frac{v_1(p)f_1(q)+v_1(q)f_1(p)}{2},\quad f_i \in
{\mathcal H}_i,\quad i=0,1.
$$

It follows that under these assumptions $H$ is bounded and
self-adjoint.

We recall that the operators $H_{01}$ and $H_{12}$ (resp.
$H_{01}^*$ and $H_{12}^*$) are called annihilation (resp. creation)
operators, respectively. In the present paper we consider the case
where the number of annihilations and creations of the particles
of the system is equal to 1, that is,
$H_{ij}\equiv 0$ for all $|i-j|>1.$

It is known that the three-particle discrete Schr\"{o}dinger
operator $\widehat{H}$ in the momentum representation acts on the
Hilbert space $L_2(({\Bbb T}^3)^3).$ Introducing the total
quasi-momentum $K \in {\Bbb T}^3$ and choosing relative coordinate system,
we decompose $\widehat{H}$ into the von Neumann direct integral (see for example
\cite{AL, LM, LS, M08})
$$
\widehat{H}=\int_{{\Bbb T}^3} \widehat{H}(K)\,dK,
$$
where the bounded self-adjoint operator $\widehat{H}(K),$ $K \in
{\Bbb T}^3,$ acts on the Hilbert space $L_2(\Gamma_K).$ Here
$\Gamma_K \subset ({\Bbb T}^3)^2$ being some manifold.

Notice that the operator matrix $H$ satisfies the main spectral
properties of the three-particle discrete Schr\"{o}dinger operator
$\widehat{H}(0),$ where the role of two-particle discrete
Schr\"{o}\-dinger operators is played by the family of the generalized
Friedrichs models \cite{ALR, ALR1}. For this reason the Hilbert space
${\mathcal H}$ is called the {\it three-particle cut subspace} of
the bosonic Fock space ${\mathcal F}_{\rm s}(L_2({\Bbb T}^3))$
over $L_2({\Bbb T}^3)$ and the operator matrix $H$ is associated
to a system describing three particles in interaction without
conservation of the number of particles. The operator $H_{22}$ is
associated to a system of three quantum particles on a lattice.

To formulate the main results of the paper we introduce the
operators $H_1$ and $H_2$ acting in the Hilbert spaces ${\mathcal
  H}$ and ${\mathcal H}_2,$ respectively, as
$$
H_1:=\left( \begin{array}{ccc}
    H_{00} & H_{01} & 0\\
    H_{01}^* & H_{11} & H_{12}\\
    0 & H_{12}^* & H_{22}^0\\
  \end{array}
\right), \quad H_2:=H_{22},
$$
and the family of bounded self-adjoint operators (generalized
Friedrichs models) $h(p),$ $p\in {\Bbb T}^3,$ acting in
${\mathcal H}_0 \oplus {\mathcal H}_1$ as
$$
h(p)=\left( \begin{array}{cc}
    h_{00}(p) & h_{01}\\
    h_{01}^* & h_{11}(p)\\
  \end{array}
\right),
$$
where
$$
h_{00}(p)f_0=w_1(p)f_0,\quad h_{01}f_1=\frac{1}{\sqrt{2}}
\int_{{\Bbb T}^3} v_1(s)f_1(s)\,ds,
$$
$$
h_{11}(p)=h_{11}^0(p)-v, \quad (h_{11}^0(p)f_1)(q)=w_2(p,q)f_1(q),
\quad (vf_1)(q)= v_2(q) \int_{{\Bbb T}^3} v_2(s) f_1(s)\,ds.
$$

We recall that the operator $h(p)$ is also called molecular-resonance model and it is associated
with the Hamiltonian of the system  consisting of at most two particles on the three-dimensional
lattice, interacting via both a nonlocal potential and creation and annihilation operators.

In \cite{Ras10-1} it was shown that for any $p \in {\Bbb T}^3$ the
operator $h(p)$ has at most three eigenvalues.

The spectrum, the essential spectrum, the
discrete and point spectrum of a bounded self-adjoint operator will be
denoted by $\sigma(\cdot),$ $\sigma_{\rm ess}(\cdot),$
$\sigma_{\rm disc}(\cdot)$ and $\sigma_{\rm p}(\cdot)$ respectively.

Set
$$
m:=\min\limits_{p,q \in {\Bbb T}^3} w_2(p,q), \quad
M:=\max\limits_{p,q \in {\Bbb T}^3} w_2(p,q).
$$

The following theorem describes the location of the
essential spectrum of the operator $H$ by the spectrum of the
family $h(p)$ of the generalized Friedrichs models \cite{Ras10-1}.

\begin{thm}\label{THM1} For the essential spectrum of $H$
  the following equality holds:
  \begin{equation}\label{Eq1}
    \sigma_{\rm ess}(H)=\sigma \cup [m; M], \quad \sigma:=
    \bigcup\limits_{p\in {{\Bbb T}^3}}\sigma_{\rm disc}(h(p)).
  \end{equation}
  Moreover, the set $\sigma_{\rm ess}(H)$ is a union of at
  most four bounded closed intervals.
\end{thm}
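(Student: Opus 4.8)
The plan is to establish the two set inclusions in the claimed equality separately, and then to read off the interval structure from the continuity of the family $h(\cdot)$. As a preliminary reduction I would discard the vacuum channel: since ${\mathcal H}_0={\Bbb C}$ is one-dimensional, the entries $H_{00}$, $H_{01}$ and $H_{01}^*$ together constitute a finite-rank operator, so by Weyl's theorem on the invariance of the essential spectrum under compact perturbations we have $\sigma_{\rm ess}(H)=\sigma_{\rm ess}({\mathcal A})$, where ${\mathcal A}$ is the block acting in ${\mathcal H}_1\oplus{\mathcal H}_2$ with entries $H_{11},H_{12},H_{12}^*,H_{22}$. All data feeding $\sigma$ and $[m;M]$ (namely $w_1,v_1,w_2,v_2$) survive in ${\mathcal A}$; indeed $h(p)$ is exactly the fiber of ${\mathcal A}$ obtained by freezing the first (spectator) momentum at $p$, so nothing is lost.

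For the inclusion $\sigma\cup[m;M]\subseteq\sigma_{\rm ess}(H)$ I would construct singular Weyl sequences. Given $\lambda\in[m;M]$, choose $(p_0,q_0)$ with $w_2(p_0,q_0)=\lambda$, which exists because $w_2$ is continuous on the connected compact $({\Bbb T}^3)^2$ so that its range is precisely $[m;M]$, and take normalized bumps in ${\mathcal H}_2$ concentrating at $(p_0,q_0)$ off the diagonal; a short estimate shows $\|H_{12}f_2^{(n)}\|\to0$ and $\|(H_{22}^0-\lambda)f_2^{(n)}\|\to0$, while the perturbation $V$ contributes terms that vanish as the support shrinks. For $\lambda\in\sigma_{\rm disc}(h(p_0))$ with normalized eigenvector $(c_0,\phi_0)$, I would take $f_1^{(n)}(p)=c_0\chi_n(p)$ and $f_2^{(n)}$ equal to the symmetrization of $\chi_n(p)\phi_0(q)$, with $\chi_n$ a spectator bump concentrating at $p_0$; using the norm-continuity of $p\mapsto h(p)$ one checks that each symmetrization piece satisfies the fiber eigenrelation of $h(p_0)$ and that the cross terms cancel in the limit, so $({\mathcal A}-\lambda)(f_1^{(n)},f_2^{(n)})\to0$.

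The reverse inclusion $\sigma_{\rm ess}(H)\subseteq\sigma\cup[m;M]$ is the analytic core, and is where I expect the main difficulty. Fix $\lambda\notin[m;M]$; then $H_{22}^0-\lambda$ is boundedly invertible on ${\mathcal H}_2$, so from the third component of $(H-\lambda)\psi=0$ one may solve $f_2=(H_{22}^0-\lambda)^{-1}(Vf_2-H_{12}^*f_1)$ and substitute into the remaining components, reducing the problem to the symmetric Weinberg equation obtained in Section~4, an equation of the form $\varphi={\mathcal K}(\lambda)\varphi$ on $L_2({\Bbb T}^3)$. For such $\lambda$ the operator ${\mathcal K}(\lambda)$ splits into a fiber-diagonal part, whose invertibility at each $p$ is governed by the Fredholm determinant of $h(p)$ and hence fails exactly on $\sigma$, plus an exchange part arising from the second summand of $V$ and from the symmetrization in $H_{12}^*$. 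The heart of the matter is to prove that this exchange part is compact — this is precisely where the continuity of the kernels in the uniform operator topology, established among the auxiliary lemmas, enters — so that for $\lambda\notin\sigma\cup[m;M]$ the operator $I-{\mathcal K}(\lambda)$ is an invertible compact perturbation of the identity. Fredholm theory then gives that $H-\lambda$ is Fredholm of index zero, i.e.\ $\lambda\notin\sigma_{\rm ess}(H)$.

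Finally, for the structure of $\sigma_{\rm ess}(H)$ as at most four intervals, I would use that $p\mapsto h(p)$ is continuous in the uniform operator topology together with the fact that each $h(p)$ has at most three eigenvalues outside its essential spectrum $[\min_q w_2(p,q);\max_q w_2(p,q)]$, as shown in \cite{Ras10-1}. The min-max principle then yields eigenvalue branches that are continuous on the connected torus ${\Bbb T}^3$, each therefore having a single closed interval as its range; since a branch touching the edge of $[m;M]$ merges into it rather than spawning a new component, the sharp count of \cite{Ras10-1} shows that $\sigma$ assembles into at most three closed intervals, and adding $[m;M]$ gives at most four. The delicate point beyond the continuity is exactly this bookkeeping that caps the number of components, which rests on the at-most-three-eigenvalue bound rather than on the general continuity argument alone.
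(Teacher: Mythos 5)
First, a point about the comparison itself: this paper never proves Theorem~\ref{THM1}. The theorem is quoted from \cite{Ras10-1} (see also \cite{MR,Ras08,YoM}), and the machinery of Sections 3--5 is built for the finiteness result, Theorem~\ref{THM4}, not for Theorem~\ref{THM1}. So your proposal can only be measured against the proofs in that literature, and in outline it reproduces them: finite-rank removal of the vacuum channel, singular Weyl sequences for the inclusion $\sigma\cup[m;M]\subseteq\sigma_{\rm ess}(H)$, a Weinberg-type reduction with a compact exchange part for the reverse inclusion, and continuity of eigenvalue branches for the interval count. One warning about sourcing, though: you cannot literally borrow the operator $W(z)$ of Section 4 and Lemma~\ref{LEM5}, as you propose. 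Those are constructed under Assumptions~\ref{A1} and \ref{A2}, which force $\int_{{\Bbb T}^3} v_1(s)v_2(s)\,ds/(w_2(p,s)-z)=0$ and thereby decouple the fiber determinant into the product $\Delta_1\Delta_2$, and their domain $\Sigma$ is defined through $\sigma_{\rm ess}(H)$ itself; using them to prove Theorem~\ref{THM1} would be both a loss of generality (the theorem carries no assumptions) and circular. For Theorem~\ref{THM1} the kernel must be rebuilt around the full fiber determinant $\Delta_1(p\,;z)\Delta_2(p\,;z)-\frac{1}{2}\bigl(\int_{{\Bbb T}^3}\frac{v_1(s)v_2(s)\,ds}{w_2(p,s)-z}\bigr)^2$, defined directly on ${\Bbb C}\setminus(\sigma\cup[m;M])$.

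The genuine error is your claim that for $\lambda\notin\sigma\cup[m;M]$ the operator $I-{\mathcal K}(\lambda)$ is an \emph{invertible} compact perturbation of the identity. That proves far too much: combined with the eigenvector equivalence it yields $\sigma(H)\subseteq\sigma\cup[m;M]$, i.e. $\sigma_{\rm disc}(H)=\emptyset$, contradicting the entire subject of this paper (Theorem~\ref{THM4} bounds the number of discrete eigenvalues below $m$, and the remark following it exhibits cases with infinitely many). What is true, and what your concluding sentence actually needs, is weaker: off $\sigma\cup[m;M]$ only the fiber-diagonal part is invertible (its determinant is continuous and non-vanishing on the compact torus, hence bounded away from zero), so $I-{\mathcal K}(\lambda)$ is a compact perturbation of an invertible operator, hence Fredholm of index zero; it genuinely fails to be invertible at the eigenvalues of $H$ in this region, which by the analytic Fredholm theorem form a discrete subset of the connected set ${\Bbb C}\setminus(\sigma\cup[m;M])$ --- these are precisely the discrete eigenvalues. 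Two smaller gaps: (a) passing from Fredholmness of $I-{\mathcal K}(\lambda)$ to Fredholmness of $H-\lambda$ requires the reduction to be implemented as an operator factorization through invertible factors, not merely the eigenvector-level substitution you describe; (b) in the interval count, the bound of three eigenvalues per fiber by itself would permit up to three components below $m$ \emph{and} three above $M$; the count of four needs the merging observation you only gesture at --- any branch that ever touches $m(p)$ or $M(p)$ has its range attached to $[m;M]$, so new components arise only from globally defined branches, of which there are at most three by the per-fiber bound. With these repairs the proposal is sound.
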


The subsets $\sigma$ and $[m; M]$ are called two-particle and
three-particle branches of the essential spectrum of $H,$
respectively.

\subsection{Main assumptions}
From now on we always assume that $\{\alpha, \beta\}=\{1,2\}$ and
$\alpha \neq \beta.$ Denote $\bar{\pi}:=(\pi,\pi,\pi).$

\begin{assumption}\label{A1}
  The function $v_\alpha(\cdot)$ is $2\bar{\pi}$ periodic and
  $v_\beta(\cdot)$ satisfies the
  condition
  \begin{equation}\label{Eq2}
    \int_{{\Bbb T}^3} v_\beta(s)g(s)\,ds=0
  \end{equation}
  for any $2\bar{\pi}$ periodic function $g\in L_2({\Bbb T}^3).$
\end{assumption}

\begin{assumption}\label{A2}
  {\rm (i)} The function $w_2(\cdot,\cdot)$ is $2\bar{\pi}$ periodic on each
  variable $p$ and $q,$ that is,
  $w_2(p+2\bar{\pi},q)=w_2(p,q+2\bar{\pi})=w_2(p,q)$
  for all $p,q \in {\Bbb T}^3;$\\
  {\rm (ii)} The function $w_2(\cdot,\cdot)$ has a unique
  non-degenerate minimum at the point $(p_0,p_0) \in ({\Bbb T}^3)^2.$
  All third order partial derivatives of the functions $w_1(\cdot)$ and $w_2(\cdot,\cdot)$ are
  continuous on ${\Bbb T}^3$ and $({\Bbb T}^3)^2,$ respectively.
\end{assumption}

Under the Assumption \ref{A1} and the part (i) of Assumption
\ref{A2} the discrete spectrum of $h(p)$ coincides (see Lemma
\ref{LEM1} below) with the union of discrete spectra of the
operators
$$
h_1(p):=\left(
  \begin{array}{cc}
    h_{00}(p) & h_{01}\\
    h_{01}^* & h_{11}^0(p)\\
  \end{array}
\right) \quad \mbox{and} \quad h_{2}(p): = h_{11}(p).
$$

It follows from the definition of the operator $h_\alpha(p)$ that
its structure is simpler than that of $h(p).$ Using the Weyl
theorem one can easily show that
$$
\sigma_{\rm ess}(h(p))=\sigma_{\rm ess}(h_1(p))=\sigma_{\rm
  ess}(h_2(p))=[m(p); M(p)],
$$
where the numbers $m(p)$ and $M(p)$ are defined by
$$
m(p):= \min_{q\in {\Bbb T}^3} w_2(p,q),\quad M(p):= \max_{q\in
  {\Bbb T}^3} w_2(p,q).
$$

For any fixed $p\in {\Bbb T}^3,$ we define the analytic functions
in ${\Bbb C}\setminus [m(p); M(p)]$ by
$$
\Delta_1(p\,; z):=w_1(p)-z-\frac{1}{2} \int_{{\Bbb T}^3}
\frac{v_1^2(s)\,ds}{w_2(p,s)-z}, \quad \Delta_2(p\,; z):=
1-\int_{{\Bbb T}^3} \frac{v_2^2(s)\,ds}{w_2(p,s)-z},
$$
which are Fredholm determinants associated with the
operators $h_1(p)$ and $h_2(p),$ respectively.

Since the function $w_2(\cdot,\cdot)$ has a unique non-degenerate
minimum at $(p_0, p_0) \in ({\Bbb T}^3)^2$ and the
function $v_\alpha(\cdot)$ is a continuous on ${\Bbb T}^3,$ for
any $p \in {\Bbb T}^3$ the integral
$$
\int_{{\Bbb T}^3} \frac{v_\alpha^2(s)\,ds}{w_2(p, s)-m}
$$
is positive and finite. Then the Lebesgue dominated convergence theorem
yields $\Delta_\alpha(p_0\,; m)=\lim\limits_{p\to p_0}
\Delta_\alpha(p\,; m),$ and hence the function
$\Delta_\alpha(\cdot\,; m)$ is a continuous on ${\Bbb T}^3.$

Note that using the fact \cite{ALDj, ALR}
$$
\sigma_{\rm ess}(H_\alpha)=\sigma_\alpha \cup [m; M], \quad
\sigma_\alpha: = \bigcup\limits_{p\in {{\Bbb T}^3}}\sigma_{\rm
  disc}(h_\alpha(p))
$$
together with Assumption \ref{A1} and part (i) of Assumption \ref{A2} the equality
\eqref{Eq1} can be written as
\begin{equation}\label{Eq12}
  \sigma_{\rm ess}(H)=\sigma_{\rm
    ess}(H_1) \cup \sigma_{\rm ess}(H_2).
\end{equation}

It was shown in \cite{ALDj, ALR} that if $\min\limits_{p \in {\Bbb T}^3}\Delta_\alpha(p\,;
m)<0,$ then $ \sigma_\alpha \cap
(-\infty; m] \neq \emptyset.$ Assuming $\min\limits_{p \in {\Bbb
    T}^3}\Delta_\alpha(p\,; m)<0,$ we introduce the following numbers:

$$
E_{\rm min}^{(\alpha)}:=\min \left\{ \sigma_\alpha \cap (-\infty;
  m] \right\},\quad E_{\rm max}^{(\alpha)}:=\max \left\{
  \sigma_\alpha \cap (-\infty; m] \right\}.
$$

The following theorem \cite{ALDj, Ras10-1} describes the structure of
the part of the essential spectrum of $H_\alpha$ located in
$(-\infty; M].$

\begin{thm}\label{THM2} Let part {\rm (ii)} of Assumption $\ref{A2}$ be fulfilled.
  Then the following assertions hold.\\
  {\rm (i)} If $\min\limits_{p \in {\Bbb T}^3}\Delta_\alpha(p\,;
  m)\geq 0,$ then
  $$
  (-\infty; M] \cap \sigma_{\rm ess}(H_\alpha)=[m; M].
  $$
  {\rm (ii)} If $\min\limits_{p \in {\Bbb T}^3}\Delta_\alpha(p\,;
  m)<0$ and $\max\limits_{p \in {\Bbb T}^3}\Delta_\alpha(p\,; m)\geq
  0,$ then
  $$
  (-\infty; M] \cap \sigma_{\rm ess}(H_\alpha)=[E_{\rm
    min}^{(\alpha)}; M], \quad E_{\rm min}^{(\alpha)}<m.
  $$
  {\rm (iii)} If $\max\limits_{p \in {\Bbb T}^3}\Delta_\alpha(p\,;
  m)<0,$ then
  $$
  (-\infty; M] \cap \sigma_{\rm ess}(H_\alpha)=[E_{\rm
    min}^{(\alpha)}; E_{\rm max}^{(\alpha)}] \cup [m; M], \quad E_{\rm
    max}^{(\alpha)}<m.
  $$
\end{thm}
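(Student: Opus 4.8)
The plan is to reduce everything to the scalar Fredholm determinant $\Delta_\alpha(p\,;z)$ and to exploit its monotonicity in $z$ together with the connectedness of ${\Bbb T}^3$. Recall that $\sigma_\alpha=\bigcup_{p}\sigma_{\rm disc}(h_\alpha(p))$ and that, for $z<m(p)$, the number $z$ is an eigenvalue of $h_\alpha(p)$ exactly when $\Delta_\alpha(p\,;z)=0$. Since we only need the part of the spectrum lying in $(-\infty;M]$ and the band $[m;M]$ is always contained in $\sigma_{\rm ess}(H_\alpha)$, it suffices to describe $\sigma_\alpha\cap(-\infty;m)$: for $z\le m$ one has $z\le m\le m(p)\le M(p)$ for every $p$, so any eigenvalue of $h_\alpha(p)$ at such a $z$ must be a lower one, governed by $\Delta_\alpha(p\,;z)=0$, whereas the upper eigenvalues (those above $M(p)$) cannot enter $(-\infty;m]$ and, when they fall in $(m;M]$, are absorbed by the band.

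First I would record two monotonicity facts. A direct differentiation gives $\partial_z\Delta_\alpha(p\,;z)<0$ for $z<m(p)$, so for each fixed $p$ the map $z\mapsto\Delta_\alpha(p\,;z)$ is strictly decreasing, stays positive as $z\to-\infty$, and hence has at most one zero below $m(p)$. Next I would introduce the envelope functions $F_\alpha(z):=\min_{p}\Delta_\alpha(p\,;z)$ and $G_\alpha(z):=\max_{p}\Delta_\alpha(p\,;z)$; because $\Delta_\alpha(\cdot\,;z)$ is continuous on the compact connected torus and jointly continuous for $z<m$, the image of $p\mapsto\Delta_\alpha(p\,;z)$ is precisely the interval $[F_\alpha(z);G_\alpha(z)]$, and the intermediate value theorem yields the key equivalence
\[
z\in\sigma_\alpha\quad\Longleftrightarrow\quad F_\alpha(z)\le 0\le G_\alpha(z),\qquad z<m.
\]
Both $F_\alpha$ and $G_\alpha$ inherit strict monotonicity in $z$ from the fibers, and, using the finiteness of $\int_{{\Bbb T}^3}v_\alpha^2(s)\,(w_2(p,s)-m)^{-1}\,ds$ guaranteed by part {\rm (ii)} of Assumption \ref{A2} together with the continuity of $\Delta_\alpha(\cdot\,;m)$ already noted, they extend continuously up to $z=m$ with $F_\alpha(m)=\min_p\Delta_\alpha(p\,;m)$ and $G_\alpha(m)=\max_p\Delta_\alpha(p\,;m)$.

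The three cases are then read off directly from the signs of these two boundary values. In case {\rm (i)}, $F_\alpha(m)\ge 0$ forces $F_\alpha(z)>0$ for every $z<m$, so the equivalence gives $\sigma_\alpha\cap(-\infty;m)=\emptyset$ and the intersection with $(-\infty;M]$ collapses to $[m;M]$. In case {\rm (ii)}, $F_\alpha(m)<0\le G_\alpha(m)$: strict monotonicity produces a unique $z^*<m$ with $F_\alpha(z^*)=0$ while $G_\alpha(z)>0$ throughout $z<m$, whence $\sigma_\alpha\cap(-\infty;m)=[z^*;m)$ and, after adjoining the band, $[E_{\rm min}^{(\alpha)};M]$ with $E_{\rm min}^{(\alpha)}=z^*<m$. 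In case {\rm (iii)}, $G_\alpha(m)<0$ gives distinct roots $z^*\le z^{**}<m$ of $F_\alpha$ and $G_\alpha$ respectively, the ordering $z^*\le z^{**}$ following from $F_\alpha\le G_\alpha$, so that $F_\alpha(z)\le 0\le G_\alpha(z)$ holds exactly on $[z^*;z^{**}]$; this produces $[E_{\rm min}^{(\alpha)};E_{\rm max}^{(\alpha)}]\cup[m;M]$ with $E_{\rm max}^{(\alpha)}=z^{**}<m$, the gap $(z^{**};m)$ being free of spectrum since $G_\alpha<0$ there.

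The main obstacle I anticipate is not the case analysis, which is elementary once the machinery is in place, but the careful treatment of the edge $z=m$, i.e. the point $p_0$ with $m(p_0)=m$. There the fiber essential spectrum $[m(p);M(p)]$ touches the value $m$, and one must verify that $\Delta_\alpha(p\,;m)$ stays finite and continuous in $p$ across $p_0$; this is precisely where the three-dimensionality and the non-degeneracy of the minimum of $w_2$ from Assumption \ref{A2}{\rm (ii)} are needed, as they render the singular integral $\int v_\alpha^2(s)/(w_2(p_0,s)-m)\,ds$ convergent. Establishing the continuous extension of $F_\alpha$ and $G_\alpha$ up to $z=m$, via monotone convergence on the compact torus, is therefore the delicate step that legitimizes reading the trichotomy off the boundary values $\min_p\Delta_\alpha(p\,;m)$ and $\max_p\Delta_\alpha(p\,;m)$.
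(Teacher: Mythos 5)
Your argument is correct, but note first that the paper does not actually prove Theorem \ref{THM2}: it is imported from \cite{ALDj, Ras10-1}, so there is no in-paper proof to compare against. Your route --- reducing to the description of $\sigma_\alpha\cap(-\infty;m)$ (the eigenvalues of $h_\alpha(p)$ above $M(p)$ being either absorbed by $[m;M]$ or lying above $M$), characterizing membership by $F_\alpha(z)\le 0\le G_\alpha(z)$ through the intermediate value theorem on the connected torus, and reading the trichotomy off the boundary values $F_\alpha(m)=\min_p\Delta_\alpha(p\,;m)$ and $G_\alpha(m)=\max_p\Delta_\alpha(p\,;m)$ --- is essentially the standard Fredholm-determinant argument used in those references; the envelope functions $F_\alpha,G_\alpha$ are a clean repackaging rather than a different idea. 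Three details deserve an explicit line in a full write-up: (a) strict monotonicity of the envelopes needs the extrema to be attained, i.e. for $z_1<z_2$ pick a minimizer $p_1$ at $z_1$ and write $F_\alpha(z_1)=\Delta_\alpha(p_1\,;z_1)>\Delta_\alpha(p_1\,;z_2)\ge F_\alpha(z_2)$, which uses compactness of ${\Bbb T}^3$; (b) the continuous extension of $F_\alpha,G_\alpha$ up to $z=m$ is Dini's theorem --- $\Delta_\alpha(p\,;z)$ decreases monotonically to $\Delta_\alpha(p\,;m)$ as $z\uparrow m$, the limit being finite and continuous by dominated convergence, and this is precisely where Assumption \ref{A2}(ii) (non-degenerate minimum, dimension three) makes $\int_{{\Bbb T}^3} v_\alpha^2(s)(w_2(p,s)-m)^{-1}\,ds$ finite, exactly as you anticipated; (c) for $\alpha=2$ the strict monotonicity of $\Delta_2(p\,;\cdot)$ requires $v_2\not\equiv 0$; if $v_2\equiv 0$ then $\Delta_2\equiv 1$ and case (i) holds vacuously. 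With these remarks made explicit, your proof is complete and yields exactly the asserted sets, including the identifications $E_{\rm min}^{(\alpha)}=z^*$ and $E_{\rm max}^{(\alpha)}=z^{**}$ with the paper's definitions of these numbers as $\min$ and $\max$ of $\sigma_\alpha\cap(-\infty;m]$.
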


We notice that if Assumption \ref{A1} and part (i) of Assumption
\ref{A2} hold, then Theorem \ref{THM2} together with the equality
\eqref{Eq12} describes the structure of the part of the essential
spectrum of $H$ located in $(-\infty; M].$

If $\min\limits_{p \in {\Bbb T}^3}\Delta_\alpha(p\,; m)<0,$ then from
$E_{\rm min}^{(\alpha)}, E_{\rm max}^{(\alpha)} \in
\sigma_\alpha$ it follows that there exist positive integers $n_\alpha,$ $k_\alpha$
and points $\{p_{\alpha i}\}_{i=1}^{n_\alpha},
\{q_{\alpha j}\}_{j=1}^{k_\alpha} \subset {\Bbb T}^3$ such that
$$
\{p \in {\Bbb T}^3: \Delta_\alpha(p\,; E_{\rm min}^{(\alpha)})=0
\}=\{p_{\alpha 1}, \ldots, p_{\alpha n_\alpha}\},
$$
$$
\{p \in {\Bbb T}^3: \Delta_\alpha(p\,; E_{\rm max}^{(\alpha)})=0
\}=\{q_{\alpha 1}, \ldots, q_{\alpha k_\alpha}\}.
$$

\begin{assumption}\label{A3}
  There exist positive numbers $C,$ $\delta$ and $\beta_{\alpha i} \in (0;
  2],$ $i=1, \ldots, n_\alpha$ such that
  $$
  |\Delta_\alpha(p\,; E_{\rm min}^{(\alpha)})|\geq C |p-p_{\alpha
    i}|^{\beta_{\alpha i}}, \quad p \in U_\delta(p_{\alpha i}), \quad i=1,
  \ldots, n_\alpha,
  $$
  and the inequality $\Delta_\alpha(p\,; E_{\rm min}^{(\alpha)})>0$
  holds for all $p \in {\Bbb T}^3 \setminus \{p_{\alpha 1}, \ldots,
  p_{\alpha n_\alpha}\}.$
\end{assumption}

\begin{assumption}\label{A4}
  There exist positive numbers $K,$ $\rho$ and $\gamma_{\alpha j} \in (0;
  2],$ $j=1, \ldots, k_\alpha$ such that
  $$
  |\Delta_\alpha(p\,; E_{\rm max}^{(\alpha)})|\geq K |p-q_{\alpha
    j}|^{\gamma_{\alpha j}}, \quad p \in U_\rho(q_{\alpha j}), \quad j=1,
  \ldots, k_\alpha,
  $$
  and the inequality $\Delta_\alpha(p\,; E_{\rm max}^{(\alpha)})<0$
  holds for all $p \in {\Bbb T}^3 \setminus \{q_{\alpha 1}, \ldots,
  q_{\alpha k_\alpha}\}.$
\end{assumption}

\subsection{Statement of the main results}
Here we formulate main results of the paper.

\begin{thm}\label{THM3} Let part {\rm
    (i)} of Assumption $\ref{A2}$ be fulfilled.\\
  {\rm (i)} If Assumption $\ref{A1}$ holds with $\alpha=1$ and
  in addition, the functions $v_0(\cdot),$ $w_1(\cdot)$
  are $2\bar{\pi}$ periodic, then $\sigma_{\rm
    disc}(H_1) \subset \sigma_{\rm p}(H).$\\
  {\rm (i)} If Assumption $\ref{A1}$ holds with $\alpha=2,$ then
  $\sigma_{\rm disc}(H_2) \subset \sigma_{\rm p}(H).$
\end{thm}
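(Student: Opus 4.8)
The plan is to produce, for each $z$ in the discrete spectrum of $H_\alpha$, an explicit eigenvector of the full operator $H$ at the same point $z$. Since $H$ and $H_1$ differ only in their $(2,2)$-entry (through the term $V$), and since $H_2=H_{22}$ occupies the lower-right corner of $H$, in both cases the eigenvalue equations for $H$ and for $H_\alpha$ agree except for a single coupling term: for part (i) an eigenvector $(f_0,f_1,f_2)$ of $H_1$ solves $H(f_0,f_1,f_2)=z(f_0,f_1,f_2)$ exactly when $Vf_2=0$, while for part (ii) the vector $(0,0,f_2)$ built from an eigenvector $f_2$ of $H_{22}$ is an eigenvector of $H$ exactly when $H_{12}f_2=0$, i.e. $\int_{{\Bbb T}^3}v_1(s)f_2(\cdot,s)\,ds\equiv 0$. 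The whole proof therefore reduces to verifying these two vanishing conditions, and the mechanism is the same in both: by Assumption \ref{A1} the function $v_\alpha$ is invariant and $v_\beta$ is anti-invariant under the involution $p\mapsto p+\bar\pi$, so I will show the relevant eigenfunctions are invariant (``even'') under this shift, which annihilates the offending term because it pairs an odd factor against an even function.

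For part (i) I would first use the third coordinate of the eigenvalue equation to solve for $f_2$ in terms of $f_1$, namely $f_2(p,q)=-\frac{v_1(p)f_1(q)+v_1(q)f_1(p)}{2(w_2(p,q)-z)}$, which is legitimate since $z\in\sigma_{\rm disc}(H_1)$ lies off $[m;M]$. Substituting this into the second coordinate and simplifying the integrals collapses the middle equation to the scalar relation $f_1(p)\,\Delta_1(p\,;z)=-v_0(p)f_0+\frac{1}{2}v_1(p)\int_{{\Bbb T}^3}\frac{v_1(s)f_1(s)}{w_2(p,s)-z}\,ds$, where $\Delta_1(\cdot\,;z)$ is precisely the Fredholm determinant of $h_1(p)$. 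Under the periodicity hypotheses on $v_0,v_1,w_1$ and part (i) of Assumption \ref{A2} on $w_2$, every factor on the right-hand side, as well as $\Delta_1(\cdot\,;z)$, is invariant under $p\mapsto p+\bar\pi$; hence $[f_1(p)-f_1(p+\bar\pi)]\,\Delta_1(p\,;z)=0$ for a.e. $p$. Since $z\notin\sigma_\alpha$ forces $\Delta_1(p\,;z)\neq 0$ for every $p$, I conclude $f_1(p)=f_1(p+\bar\pi)$, so $f_1$ is even; the displayed formula then makes $f_2$ even in each variable, and as $v_2=v_\beta$ is odd under the shift, $Vf_2=0$. Thus the (nonzero) eigenvector of $H_1$ is already an eigenvector of $H$.

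For part (ii) the argument is parallel but one step shorter, as here $f_0=f_1=0$. Starting from the eigenvalue equation for $H_{22}$ I would write $f_2(p,q)=\frac{v_2(q)\varphi(p)+v_2(p)\varphi(q)}{w_2(p,q)-z}$ with $\varphi(p):=\int_{{\Bbb T}^3}v_2(s)f_2(p,s)\,ds$, then multiply by $v_2(q)$ and integrate to obtain $\varphi(p)\,\Delta_2(p\,;z)=v_2(p)\int_{{\Bbb T}^3}\frac{v_2(s)\varphi(s)}{w_2(p,s)-z}\,ds$. Now $v_2=v_\alpha$ is even under $p\mapsto p+\bar\pi$ and $w_2$ is shift-invariant, so both the right-hand side and $\Delta_2(\cdot\,;z)$ are even; the nonvanishing of $\Delta_2(p\,;z)$ (again from $z\notin\sigma_\alpha$, since $z\in\sigma_{\rm disc}(H_2)$ lies off $\sigma_2\cup[m;M]$) then gives $\varphi(p)=\varphi(p+\bar\pi)$. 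Feeding this back shows $f_2$ is even in each variable, whence $\int_{{\Bbb T}^3}v_1(s)f_2(\cdot,s)\,ds\equiv 0$ because $v_1=v_\beta$ is odd; so $H_{12}f_2=0$ and $(0,0,f_2)$ is a nonzero eigenvector of $H$ at $z$.

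The one genuinely delicate point, and the step I expect to demand the most care, is the assertion that $\Delta_\alpha(p\,;z)\neq 0$ for every $p\in{\Bbb T}^3$. This is where the hypothesis $z\in\sigma_{\rm disc}(H_\alpha)$ is used in full: a zero of $\Delta_\alpha(\cdot\,;z)$ at some $p$ would place $z$ in $\sigma_{\rm disc}(h_\alpha(p))\subset\sigma_\alpha\subset\sigma_{\rm ess}(H_\alpha)$, contradicting that $z$ is an isolated eigenvalue. I would justify it through the identification of $\Delta_\alpha(p\,;\cdot)$ as the Fredholm determinant of $h_\alpha(p)$ together with the relations $\sigma_\alpha=\bigcup_{p}\sigma_{\rm disc}(h_\alpha(p))$ and $\sigma_{\rm ess}(H_\alpha)=\sigma_\alpha\cup[m;M]$ recalled above. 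Once this nonvanishing is secured, all divisions by $\Delta_\alpha$ and by $w_2-z$ are harmless, and the $L_2$-membership of the reconstructed $f_2$ follows at once from the continuity of the $v_i$ and the separation of $z$ from $[m;M]$.
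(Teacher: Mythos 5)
Your skeleton coincides with the paper's proof: reduce part (i) to the single condition $Vf_2=0$ and part (ii) to $H_{12}f_2=0$; solve the last component of the eigenvalue equation (legitimate since $z\notin[m;M]$); substitute to obtain the scalar relations $\Delta_1(p\,;z)f_1(p)=-v_0(p)f_0+\frac{1}{2}v_1(p)\int_{{\Bbb T}^3}\frac{v_1(s)f_1(s)\,ds}{w_2(p,s)-z}$ and $\Delta_2(p\,;z)\varphi(p)=v_2(p)\int_{{\Bbb T}^3}\frac{v_2(s)\varphi(s)\,ds}{w_2(p,s)-z}$; use $z\in\sigma_{\rm disc}(H_\alpha)$, $\sigma_{\rm ess}(H_\alpha)=\sigma_\alpha\cup[m;M]$ to get $\Delta_\alpha(p\,;z)\neq 0$ for all $p$; deduce a symmetry of $f_1$ (resp.\ $\varphi$), hence of $f_2$; and kill the coupling term. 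All of these analytic steps are correct and are exactly the paper's.

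The genuine flaw is how you decode Assumption~\ref{A1}: you replace ``$v_\alpha$ is $2\bar{\pi}$ periodic and $v_\beta$ satisfies \eqref{Eq2}'' by ``$v_\alpha$ is invariant and $v_\beta$ anti-invariant under the involution $p\mapsto p+\bar{\pi}$ on the torus.'' These are not equivalent, and the paper's own example (Lemma~\ref{Example}) falsifies your reading: there the ``$2\bar{\pi}$ periodic'' function is $v_1(p)=\sqrt{2\mu_1}\sum_i c_i\cos p^{(i)}$, which satisfies $v_1(p+\bar{\pi})=-v_1(p)$, i.e.\ it is \emph{odd}, not even, under your shift; the function satisfying \eqref{Eq2} is $v_2(p)=\sqrt{\mu_2}\sum_i d_i\cos(p^{(i)}/2)$, for which $v_2(p+\bar{\pi})=-\sqrt{\mu_2}\sum_i d_i\sin(p^{(i)}/2)$ is not $-v_2(p)$; and $w_2(p,q)=\varepsilon(p)+\varepsilon(q)$ is not invariant under $p\mapsto p+\bar{\pi}$ either. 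So the two steps carrying your argument --- ``every factor on the right-hand side is invariant under $p\mapsto p+\bar{\pi}$'' and ``$v_\beta$ odd against an even function integrates to zero'' --- do not follow from the hypotheses of the theorem. (The symmetry the paper intends is translation by $2\bar{\pi}=(2\pi,2\pi,2\pi)$ acting through the defining formulas, as in the proof of Lemma~\ref{Example}, not translation by $\bar{\pi}$ on ${\Bbb T}^3$.) The repair is short and is what the paper does: your displayed formulas exhibit $f_1$ (part (i)) and $\varphi$ (part (ii)) as expressions built only from $2\bar{\pi}$ periodic data --- $v_0$, $v_1$, $w_1$, $w_2(\cdot,q)$, $\Delta_\alpha(\cdot\,;z)$, resp.\ $v_2$, $w_2$, $\Delta_2(\cdot\,;z)$ --- hence they are themselves $2\bar{\pi}$ periodic; then $f_2(p,\cdot)$ is $2\bar{\pi}$ periodic for each fixed $p$, and condition \eqref{Eq2}, applied directly as an orthogonality statement rather than via an even/odd splitting, gives $\int_{{\Bbb T}^3}v_2(s)f_2(p,s)\,ds=0$ (so $Vf_2=0$, the second term of $V$ vanishing by the symmetry of $f_2$) in part (i), and $\int_{{\Bbb T}^3}v_1(s)f_2(p,s)\,ds=0$ (so $H_{12}f_2=0$) in part (ii).
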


\begin{thm}\label{THM4} Let Assumptions $\ref{A1}$ and
  $\ref{A2}$
  be fulfilled. Assume\\
  {\rm ($\alpha$.1)} $\min\limits_{p \in {\Bbb T}^3} \Delta_\alpha(p\,; m)>0;$\\
  {\rm ($\alpha$.2)} $\min\limits_{p \in {\Bbb T}^3}
  \Delta_\alpha(p\,; m)<0,$ $\max\limits_{p \in {\Bbb
      T}^3}\Delta_\alpha(p\,; m)\geq 0$ and Assumption $\ref{A3}$ holds;\\
  {\rm ($\alpha$.3)} $\max\limits_{p \in {\Bbb T}^3}
  \Delta_\alpha(p\,; m)<0$ and Assumptions $\ref{A3}, \ref{A4}$
  hold.

  If for some $i,j \in \{1,2,3\}$ the conditions $(1.i)$ and $(2.j)$
  hold, then the operator matrix $H$ has a finite number of discrete eigenvalues
  lying on the left of $m.$
\end{thm}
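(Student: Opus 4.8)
The plan is to reduce the eigenvalue problem $H\psi=z\psi$ with $z<m$ to a single integral equation on $\mathcal{H}_1=L_2({\Bbb T}^3)$ and then to apply a Birman--Schwinger type counting argument. Writing $\psi=(f_0,f_1,f_2)$, for $z<m$ the operator $H_{22}-z$ is boundedly invertible (since $z$ lies below the spectrum of $H_{22}$) and $H_{00}-z=w_0-z$ is a nonzero scalar for $z$ away from $w_0$; eliminating $f_0$ from the first block equation and $f_2$ from the third via the resolvent $(H_{22}-z)^{-1}$ leaves a closed equation for $f_1$. After the symmetrization carried out in Section 4 this takes the form $f_1=\mathbf{W}(z)f_1$, where $\mathbf{W}(z)$ is a self-adjoint integral operator whose kernel is built from the functions $v_i$, from $(w_2(p,q)-z)^{-1}$, and, crucially, from the reciprocals of the Fredholm determinants $\Delta_\alpha(\cdot\,;z)$ entering in the symmetric, square-root form $\Delta_\alpha(p;z)^{-1/2}\,(\cdots)\,\Delta_\alpha(q;z)^{-1/2}$.

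The Birman--Schwinger principle, together with the monotonicity properties of the resolvents in $z$, then identifies the number of eigenvalues of $H$ lying below $m$ with the number of eigenvalues of $\mathbf{W}(z)$ exceeding $1$, taken in the limit $z\to m-0$; this number increases to the total count as $z\uparrow m$. Thus finiteness will follow once we show that the threshold operator $\mathbf{W}(m):=\lim_{z\to m-0}\mathbf{W}(z)$ exists as a compact operator and that the convergence holds in the Hilbert--Schmidt, hence uniform operator, topology: a compact self-adjoint operator has only finitely many eigenvalues in $[1;\infty)$, and by the norm continuity these furnish an upper bound valid for all $z<m$.

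The core of the argument is therefore a Hilbert--Schmidt bound on $\mathbf{W}(z)$ that is uniform up to $z=m$. Two sources of singularity must be controlled as $z\to m$. The first comes from $(w_2(p,q)-z)^{-1}$ near the minimum point $(p_0,p_0)$; here part {\rm (ii)} of Assumption \ref{A2} gives $w_2(p,q)-m\sim|p-p_0|^2+|q-p_0|^2$, and since we work on the three-dimensional torus the local integral $\int|s-p_0|^{-2}\,ds$ converges, keeping the corresponding part of the kernel square integrable. The second and decisive source is the factor $\Delta_\alpha(p;z)^{-1/2}$, which blows up as $z\to m$ exactly at the finitely many points $\{p_{\alpha i}\}$ in case $(\alpha.2)$ and additionally at $\{q_{\alpha j}\}$ in case $(\alpha.3)$. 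Assumptions \ref{A3} and \ref{A4} furnish the lower bounds $|\Delta_\alpha(p;E_{\rm min}^{(\alpha)})|\ge C|p-p_{\alpha i}|^{\beta_{\alpha i}}$ and $|\Delta_\alpha(p;E_{\rm max}^{(\alpha)})|\ge K|p-q_{\alpha j}|^{\gamma_{\alpha j}}$ with exponents in $(0;2]$; since $\beta_{\alpha i},\gamma_{\alpha j}\le 2<3$, the single-power integrals $\int_{U_\delta}|p-p_{\alpha i}|^{-\beta_{\alpha i}}\,dp$ converge in three dimensions. This is precisely why the symmetrized form of the kernel is needed: it replaces the non-integrable $\Delta_\alpha^{-1}$ by the integrable $\Delta_\alpha^{-1/2}$ in each variable separately. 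In case $(\alpha.1)$ the determinant $\Delta_\alpha(\cdot\,;m)$ stays strictly positive, so no singularity arises and the estimate is immediate.

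The combinations $(1.i)$ and $(2.j)$ are handled by the same scheme with the roles of $\alpha=1$ and $\alpha=2$ interchanged; one splits $\mathbf{W}(z)$ into a part localized in small balls around the singular points, estimated by the bounds just described, and a remainder with bounded kernel that is manifestly Hilbert--Schmidt and continuous up to $z=m$. The main obstacle is the uniformity as $z\uparrow m$: one must show not merely that $\mathbf{W}(z)$ is Hilbert--Schmidt for each fixed $z<m$ but that $\|\mathbf{W}(z)-\mathbf{W}(m)\|_{\rm HS}\to 0$, for which the Lebesgue dominated convergence theorem is applied to the kernels using the $z$-independent majorants supplied by Assumptions \ref{A3}, \ref{A4} and the non-degeneracy in part {\rm (ii)} of Assumption \ref{A2}. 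Once this continuity in the uniform operator topology is established, the finiteness of $\sigma_{\rm disc}(H)\cap(-\infty;m)$ follows at once from the spectral theorem for the compact operator $\mathbf{W}(m)$.
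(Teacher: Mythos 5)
Your proposal has genuine gaps, and they are structural rather than technical. First, the reduction rests on a false claim: for $z<m$ the operator $H_{22}-z$ need not be boundedly invertible. Under condition $(2.2)$ or $(2.3)$ one has $\min_{p}\Delta_2(p\,;m)<0$, so by Theorem \ref{THM2} the spectrum of $H_2=H_{22}$ contains whole intervals below $m$ (e.g. $[E_{\rm min}^{(2)};M]$, or $[E_{\rm min}^{(2)};E_{\rm max}^{(2)}]\cup[m;M]$); you cannot eliminate $f_2$ through $(H_{22}-z)^{-1}$ on all of $(-\infty;m)$. This is exactly why the paper (Lemma \ref{LEM4}) inverts only the multiplication part $H_{22}^0-z$, keeps the auxiliary unknown $\overline f_2(p)=\int_{{\Bbb T}^3} v_2(s)f_2(p,s)\,ds$, and arrives at a $3\times 3$ Weinberg system on all of ${\mathcal H}$, in which the zeros of $\Delta_2(\cdot\,;z)$ — precisely the spectrum you attempted to invert through — reappear as the $\Delta_2^{-1/2}$ singularities. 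Second, the Birman--Schwinger identification you invoke (number of eigenvalues of $H$ below $m$ equals the number of eigenvalues of $\mathbf W(z)$ exceeding $1$ as $z\to m-0$, monotonically in $z$) is asserted, not proved, and it cannot hold in the stated global form: in cases $(\alpha.2)$, $(\alpha.3)$ the essential spectrum of $H$ itself has branches below $m$ (the two-particle branches), the discrete eigenvalues to be counted lie in several spectral gaps, and $\mathbf W(z)$ is not defined, let alone compact, for $z$ inside those branches, so no monotone counting function connects the gaps. Monotonicity would also require a sign-definite perturbation structure which the off-diagonal couplings $H_{01},H_{12}$ do not provide; note the paper's Weinberg operator is not even self-adjoint ($W_{02}(z)\equiv 0$ while $W_{20}(z)\neq 0$), and its proof never counts eigenvalues of $W(z)$.

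Third, you misplace the singularities governed by Assumptions \ref{A3} and \ref{A4}: the sets $\{p_{\alpha i}\}$ and $\{q_{\alpha j}\}$ are the zeros of $\Delta_\alpha(\cdot\,;E_{\rm min}^{(\alpha)})$ and $\Delta_\alpha(\cdot\,;E_{\rm max}^{(\alpha)})$, i.e. the kernel blows up there as $z$ approaches the two-particle thresholds $E_{\rm min}^{(\alpha)},E_{\rm max}^{(\alpha)}$; for $z$ in the gap $(E_{\rm max};m)$ near $m$ the functions $\Delta_\alpha(\cdot\,;z)$ have no zeros at all, and the only singularity as $z\to m-0$ is $(w_2(p,q)-z)^{-1}$ at $(p_0,p_0)$. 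Consequently an analysis confined to $z\to m-0$, even if completed, would not prove the theorem: one must exclude accumulation of eigenvalues at every edge of every gap of $\sigma_{\rm ess}(H)$ in $(-\infty;m]$, and in case $(\alpha.2)$ the relevant threshold is $E_{\rm min}^{(\alpha)}$, not $m$, since $[E_{\rm min}^{(\alpha)};M]$ is already essential spectrum. The paper handles all edges at once: Lemma \ref{LEM5} gives compactness and norm continuity of $W(z)$ on the whole closed set $\Sigma$, and then, assuming infinitely many eigenvalues in some gap, it takes orthonormal eigenvectors $\varphi_k=W(E_k)\varphi_k$ with $E_k$ converging to a gap edge $z_*$ and derives the contradiction $1=\|\varphi_k\|\le\|W(E_k)-W(z_*)\|+\|W(z_*)\varphi_k\|\to 0$, using that an orthonormal sequence converges weakly to zero and that compact operators turn it into a norm-null sequence. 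Your Hilbert--Schmidt estimates (three-dimensional integrability of $|p-p_{\alpha i}|^{-\beta_{\alpha i}}$ with $\beta_{\alpha i}\le 2$, the quadratic bound from Assumption \ref{A2}(ii)) are the right ingredients for that lemma, but the counting framework you build around them does not stand.
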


\begin{rem}
  The class of functions $w_1(\cdot),$ $v_i(\cdot),$ $i=1,2$ and
  $w_2(\cdot,\cdot)$ satisfying the conditions in Theorem
  $\ref{THM4}$ is nonempty $($see Lemma $\ref{Example}).$
\end{rem}

\begin{rem}
  Note that comparing Theorems $\ref{THM3}$ and $\ref{THM4}$ we have that if the condition
  $(\alpha.j)$ in Theorem $\ref{THM4}$ holds for some $j \in \{1,2,3\},$
  then the operator $H_\alpha$ has a finite number of discrete eigenvalues lying on the left of $m.$
  If $\min\limits_{p \in {\Bbb T}^3} \Delta_\alpha(p\,; m)=\Delta_\alpha(p_0\,; m)=0$
  and $v_\alpha(p_0) \neq 0,$ then $\min \sigma_{\rm ess}(H_\alpha)=m$ and
  it was shown in \cite{ALR} for $\alpha=1$ and in \cite{ALDj} for $\alpha=2$ that the operator
  $H_\alpha$ has infinitely many eigenvalues lying on the left of $m.$ Hence, in this case by
  Theorem $\ref{THM3}$ the operator $H$ also has infinitely many eigenvalues lying on the left of $m.$
\end{rem}

\section{Some auxiliary statements}

The following lemma describes the relation between the eigenvalues
of the operators $h(p)$ and $h_\alpha(p).$

\begin{lem}\label{LEM1} Let Assumption $\ref{A1}$ and part {\rm
    (i)} of Assumption $\ref{A2}$ be fulfilled. For any fixed $p \in
  {\Bbb T}^3$ the number $z(p) \in {\Bbb C} \setminus [m(p); M(p)]$
  is an eigenvalue for $h(p)$ if and only if $z(p)$ is an eigenvalue
  for at least one of the operators $h_1(p)$ and $h_2(p).$
\end{lem}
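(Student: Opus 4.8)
The plan is to exploit a hidden parity symmetry furnished by Assumptions \ref{A1} and \ref{A2}(i) and to show that $h(p)$ splits, orthogonally, into two reducing blocks, one of which is a block of $h_1(p)$ and the other a block of $h_2(p)$. First I would introduce the shift $(J\varphi)(q):=\varphi(q+\bar{\pi})$ on $L_2({\Bbb T}^3)$. Since $q+2\bar{\pi}\equiv q$ on the torus, $J$ is a self-adjoint unitary with $J^2=I$, so we get an orthogonal decomposition $L_2({\Bbb T}^3)=L_2^+\oplus L_2^-$, where $L_2^\pm:=\ker(J\mp I)$, and every operator commuting with $J$ is reduced by it. The $2\bar{\pi}$-periodic functions are exactly the members of $L_2^+$; hence Assumption \ref{A1} puts $v_\alpha\in L_2^+$, while the orthogonality relation \eqref{Eq2} says precisely that $v_\beta\perp L_2^+$, i.e. $v_\beta\in L_2^-$. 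By Assumption \ref{A2}(i) the function $w_2(p,\cdot)$ is $2\bar{\pi}$-periodic, so multiplication by it commutes with $J$, and therefore $h_{11}^0(p)$ is reduced by $L_2^+\oplus L_2^-$.

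Take first $\alpha=1$, so $v_1\in L_2^+$ and $v_2\in L_2^-$. The annihilation operator $h_{01}$ pairs $f_1$ only against $v_1\in L_2^+$, and $h_{01}^*$ takes values in the line ${\Bbb C}v_1\subset L_2^+$; while $(vf_1)=v_2\int_{{\Bbb T}^3}v_2 f_1$ shows that $v$ has range in ${\Bbb C}v_2\subset L_2^-$ and annihilates $L_2^+$. Combined with the reduction of $h_{11}^0(p)$, this shows that in the decomposition ${\mathcal H}_0\oplus L_2^+\oplus L_2^-$ both ${\mathcal H}_0\oplus L_2^+$ and $L_2^-$ reduce $h(p)$, and
$$
h(p)=A_+\oplus D_-,\qquad
A_+=\left(\begin{array}{cc} h_{00}(p) & h_{01}\\ h_{01}^* & h_{11}^0(p)|_{L_2^+}\end{array}\right),\qquad
D_-=\bigl(h_{11}^0(p)-v\bigr)|_{L_2^-},
$$
the term $-v$ being absent on $L_2^+$. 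The identical computation gives $h_1(p)=A_+\oplus B_-$ with $B_-=h_{11}^0(p)|_{L_2^-}$, and $h_2(p)=C_+\oplus D_-$ with $C_+=h_{11}^0(p)|_{L_2^+}$. Thus $A_+$ is a common block of $h(p)$ and $h_1(p)$, and $D_-$ is a common block of $h(p)$ and $h_2(p)$, whereas $B_-$ and $C_+$ are pure multiplication operators.

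Finally I would fix $z(p)\in{\Bbb C}\setminus[m(p);M(p)]$: there $w_2(p,\cdot)-z(p)$ is bounded away from zero, so $B_-$ and $C_+$ have $z(p)$ in their resolvent sets and contribute no eigenvector. Hence $z(p)$ is an eigenvalue of $h(p)$ iff it is an eigenvalue of $A_+$ or of $D_-$; it is an eigenvalue of $h_1(p)$ iff it is an eigenvalue of $A_+$; and it is an eigenvalue of $h_2(p)$ iff it is an eigenvalue of $D_-$. Combining these three equivalences yields the claim, and the case $\alpha=2$ is the same after interchanging $L_2^+$ and $L_2^-$ (now $v_2\in L_2^+$, $v_1\in L_2^-$). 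The one genuinely delicate point is the structural step: identifying the $2\bar{\pi}$-periodic functions with the eigenspace $L_2^+$ of $J$, reading off from \eqref{Eq2} that $v_\beta\in L_2^-=(L_2^+)^\perp$, and checking that $J$ reduces $h_{11}^0(p)$. Once the reducing decomposition ${\mathcal H}_0\oplus L_2^+\oplus L_2^-$ is established, the remainder is bookkeeping together with the elementary fact that a multiplication operator has no eigenvalues off its range.
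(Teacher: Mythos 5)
Your strategy — split $h(p)$ by a parity involution into a block shared with $h_1(p)$ and a block shared with $h_2(p)$ — is sound, but the involution you chose is the wrong one, and this breaks precisely the step you yourself single out as the delicate point. You take $J$ to be the shift by $\bar{\pi}$ and declare $\ker(J-I)$ to be the $2\bar{\pi}$-periodic functions. That identification is false: $\ker(J-I)$ consists of the functions invariant under the shift by $\bar{\pi}$, which is a strictly stronger property than invariance under the shift by $2\bar{\pi}=J^2$. The paper's own example (Lemma \ref{Example}) defeats every structural claim in your argument: there $v_1$ is a combination of $\cos p^{(i)}$, so $Jv_1=-v_1$ and $v_1\in\ker(J+I)$ rather than $\ker(J-I)$; $v_2$ is a combination of $\cos (p^{(i)}/2)$, and $Jv_2$ is a combination of $-\sin (p^{(i)}/2)$, so $v_2$ lies in neither eigenspace; and $w_2(p,q)=\varepsilon(p)+\varepsilon(q)$ with $\varepsilon(q)=\sum_i (1-\cos q^{(i)})$ satisfies $\varepsilon(q+\bar{\pi})\neq\varepsilon(q)$, so multiplication by $w_2(p,\cdot)$ does not commute with your $J$ and $h_{11}^0(p)$ is not reduced by your decomposition. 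There is also an internal inconsistency: if, as you argue, $J^2=I$ because $q+2\bar{\pi}\equiv q$ on the torus, then ``$2\bar{\pi}$-periodic'' would be a vacuous condition satisfied by all of $L_2({\Bbb T}^3)$, and \eqref{Eq2} would force $v_\beta=0$. The assumptions are non-trivial only when the shift $s\mapsto s+2\bar{\pi}$ is itself read as a non-trivial measure-preserving involution, which is exactly how the paper uses it (see the proof of Lemma \ref{Example}).

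The repair is small but essential: take $J$ to be the shift by $2\bar{\pi}$ itself, i.e.\ the involution that actually appears in Assumptions \ref{A1} and \ref{A2}(i). Then $\ker(J-I)$ is precisely the space of $2\bar{\pi}$-periodic functions, Assumption \ref{A1} gives $v_\alpha\in\ker(J-I)$ and, via \eqref{Eq2}, $v_\beta\in\ker(J-I)^\perp=\ker(J+I)$, and Assumption \ref{A2}(i) makes multiplication by $w_2(p,\cdot)$ commute with $J$. With this $J$ all of your bookkeeping goes through verbatim: $h(p)=A_+\oplus D_-$, $h_1(p)=A_+\oplus B_-$, $h_2(p)=C_+\oplus D_-$, where $B_-$ and $C_+$ are restrictions of multiplication by $w_2(p,\cdot)$ and hence have no spectrum outside $[m(p);M(p)]$, which yields the lemma. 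The corrected argument is genuinely different from the paper's: the paper never decomposes the space, but instead writes the eigenvalue system, reduces it to a $2\times 2$ linear system in $(f_0,C_{f_1})$, obtains the determinant condition $\Delta_1(p\,;z)\Delta_2(p\,;z)-\frac{1}{2}\bigl(\,\int_{{\Bbb T}^3}\frac{v_1(s)v_2(s)\,ds}{w_2(p,s)-z}\,\bigr)^2=0$, and then uses Assumptions \ref{A1} and \ref{A2}(i) to show that the cross integral vanishes, leaving $\Delta_1\Delta_2=0$. Both proofs run on the same orthogonality; yours (once fixed) is more structural and explains why the cross term vanishes, while the paper's is shorter and directly produces the determinants $\Delta_\alpha$ used throughout the rest of the paper.
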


\begin{proof} Let $p \in {\Bbb T}^3$ be fixed.
  Suppose $(f_0,f_1) \in {\mathcal H}_0 \oplus {\mathcal H}_1$ is an
  eigenvector of the operator $h(p)$ associated with the eigenvalue
  $z(p) \in {\Bbb C} \setminus [m(p); M(p)]$. Then $f_0$ and $f_1$
  satisfy the following system of equations:
  \begin{equation}\label{Eq3}
    \begin{gathered}
      (w_1(p)-z(p))f_0+ \frac{1}{\sqrt{2}} \int_{{\Bbb T}^3}
      v_1(s)f_1(s)\,ds=0,
      $$
      \\
      \frac{1}{\sqrt{2}} v_1(q)f_0+(w_2(p,q)-z(p))f_1(q)-v_2(q)
      \int_{{\Bbb T}^3} v_2(s) f_1(s)\,ds=0.
    \end{gathered}
  \end{equation}

  Since for any $q\in {\Bbb T}^3$ the relation $w_2(p,q)-z(p) \neq 0$
  holds, from the second equation in the system \eqref{Eq3} for
  $f_1$ we have
  \begin{equation}\label{Eq4}
    f_1(q)=\frac{C_{f_1} v_2(q)}{w_2(p,q)-z(p)}- \frac{1}{\sqrt{2}}
    \frac{v_1(q) f_0}{w_2(p,q)-z(p)},
  \end{equation}
  where
  \begin{equation}\label{Eq5}
    C_{f_1}=\int_{{\Bbb T}^3} v_2(s) f_1(s)\,ds.
  \end{equation}

  Substituting the expression \eqref{Eq4} for $f_1$ into the first
  equation of the system \eqref{Eq3} and the equality \eqref{Eq5},
  we conclude that the system of equations \eqref{Eq3} has a
  nontrivial solution if and only if the system of equations
  \begin{align*}
    & \Delta_1(p\,; z(p))f_0+\frac{1}{\sqrt{2}} \int_{{\Bbb T}^3}
    \frac{v_1(s) v_2(s)\,ds}{w_2(p,s)-z(p)} C_{f_1} =0,\\
    & \frac{1}{\sqrt{2}} \int_{{\Bbb T}^3} \frac{v_1(s)
      v_2(s)\,ds}{w_2(p,s)-z(p)} f_0+\Delta_2(p\,; z(p)) C_{f_1}=0
  \end{align*}
  has a nontrivial solution $(f_0,C_{f_1}) \in {\Bbb C}^2,$ i.e. if the condition
  $$
  \Delta_1(p\,; z(p)) \Delta_2(p\,; z(p))-\frac{1}{2}
  \Bigg(\,\int_{{\Bbb T}^3} \frac{v_1(s) v_2(s)\,ds}{w_2(p,s)-z(p)}\,\Bigg)^2=0
  $$
  is satisfied.

  By part (i) of Assumption \ref{A2} for any fixed $p\in {\Bbb T}^3$
  the function $(w_2(p,\cdot)-z(p))^{-1} \in L_2({\Bbb T}^3)$ is $2 \bar{\pi}$
  periodic. Applying Assumption \ref{A1} we obtain
  $$
  \int_{{\Bbb T}^3} \frac{v_1(s) v_2(s)\,ds}{w_2(p,s)-z(p)}=0.
  $$

  If we set $v_2(q) \equiv 0$ in the operator $h(p),$ then $h(p)=h_1(p);$
  in this case the number $z(p) \in {\Bbb C} \setminus [m(p); M(p)]$ is an eigenvalue of
  $h_1(p)$ if and only if $\Delta_1(p\,; z(p))=0.$ Similarly one can show that the
  number $z(p) \in {\Bbb C} \setminus [m(p); M(p)]$ is an eigenvalue of
  $h_2(p)$ if and only if $\Delta_2(p\,; z(p))=0.$
  The lemma is proved.
\end{proof}

\begin{lem}\label{LEM2}
  Let $\min\limits_{p \in {\Bbb T}^3} \Delta_\alpha(p\,; m)>0.$ Then
  there exists a positive number $C_1$ such that the inequality
  $\Delta_\alpha(p\,; z) \geq C_1$ holds for all $p \in {\Bbb T}^3$
  and $z \leq m.$
\end{lem}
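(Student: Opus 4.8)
The plan is to exploit the monotonicity of $\Delta_\alpha(p\,;\cdot)$ on the half-line $(-\infty;m]$ in order to reduce the estimate over all $z\le m$ to the single value $z=m$, and then to invoke the hypothesis $\min_{p}\Delta_\alpha(p\,;m)>0$ together with the compactness of ${\Bbb T}^3$.

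First I would fix $p\in{\Bbb T}^3$ and $z\le m$ and observe that, since $w_2(p,s)\ge m(p)\ge m\ge z$ for every $s\in{\Bbb T}^3$, one has $w_2(p,s)-z\ge w_2(p,s)-m\ge 0$, so that the pointwise bound $v_\alpha^2(s)/(w_2(p,s)-z)\le v_\alpha^2(s)/(w_2(p,s)-m)$ holds for almost every $s$. Integrating this inequality, and recalling that the integral enters $\Delta_\alpha$ with a negative coefficient (and, for $\alpha=1$, using in addition that $-z\ge -m$), I obtain $\Delta_\alpha(p\,;z)\ge\Delta_\alpha(p\,;m)$ for every $z\le m$; that is, for fixed $p$ the map $z\mapsto\Delta_\alpha(p\,;z)$ is non-increasing on $(-\infty;m]$ and hence attains its minimum over this half-line at the right endpoint $z=m$.

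Second, I would pass to the infimum in $p$. The discussion preceding the lemma shows that $\Delta_\alpha(\cdot\,;m)$ is continuous on the compact torus ${\Bbb T}^3$, so the quantity $C_1:=\min_{p\in{\Bbb T}^3}\Delta_\alpha(p\,;m)$ is attained and, by hypothesis, is strictly positive. Combining the two steps gives $\Delta_\alpha(p\,;z)\ge\Delta_\alpha(p\,;m)\ge C_1>0$ for all $p\in{\Bbb T}^3$ and all $z\le m$, which is precisely the assertion.

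The only point requiring care is the boundary value $z=m$: when $p=p_0$ the denominator $w_2(p_0,s)-m$ vanishes at $s=p_0$, so the defining integral of $\Delta_\alpha(p_0\,;m)$ is improper. This is exactly where the non-degenerate minimum of $w_2$ (part (ii) of Assumption \ref{A2}) and the three-dimensionality of ${\Bbb T}^3$ enter: near $s=p_0$ one has $w_2(p_0,s)-m\sim c|s-p_0|^2$, and $\int_{|s-p_0|<\delta}|s-p_0|^{-2}\,ds<\infty$ in ${\Bbb R}^3$, so the integral is finite and the pointwise comparison used above is legitimate even at $z=m$. This finiteness, and the resulting continuity of $\Delta_\alpha(\cdot\,;m)$, has already been recorded in the text, so no further estimate is needed; the remainder of the argument is the elementary monotonicity-plus-compactness scheme just described.
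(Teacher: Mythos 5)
Your proposal is correct and follows essentially the same route as the paper: the paper's proof also reduces the claim to the endpoint $z=m$ via the monotonic decrease of $\Delta_\alpha(p\,;\cdot)$ on $(-\infty;m]$ and then sets $C_1:=\min_{p\in{\Bbb T}^3}\Delta_\alpha(p\,;m)>0$. You merely spell out two points the paper takes for granted, namely the pointwise comparison $w_2(p,s)-z\geq w_2(p,s)-m\geq 0$ behind the monotonicity and the finiteness of the integral at $z=m$ (already recorded in the text before the lemma), which is fine.
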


\begin{proof}
  Since for any $p \in {\Bbb T}^3$ the function $\Delta_\alpha(p\,;
  \cdot)$ is monotonically decreasing in $(-\infty; m],$ we have
  $$
  \Delta_\alpha(p\,; z) \geq \Delta_\alpha(p\,; m) \geq
  \min\limits_{p \in {\Bbb T}^3} \Delta_\alpha(p\,; m)>0
  $$
  for all $p \in {\Bbb T}^3$ and $z \leq m.$ Now setting
  $C_1:=\min\limits_{p \in {\Bbb T}^3} \Delta_\alpha(p\,; m)$ we
  complete the proof of lemma.
\end{proof}

For some $\delta>0$ we set
$$
U_\delta(p_0):=\{p\in {\Bbb T}^3: |p-p_0|<\delta\}.
$$

\begin{lem}\label{LEM6}
  If Assumption $\ref{A3}$ resp. $\ref{A4}$ holds, then for any
  $\delta>0$ there exist the positive numbers $C_1(\delta)$ and
  $C_2(\delta)$ such that\\
  {\rm (i)} $\Delta_\alpha(p\,; E_{\rm min}^{(\alpha)}) \geq
  C_1(\delta)$ for any $p \in {\Bbb T}^3 \setminus
  \bigcup\limits_{i=1}^{n_\alpha} U_\delta(p_{\alpha i});$\\
  resp.\\
  {\rm (ii)} $|\Delta_\alpha(p\,; E_{\rm max}^{(\alpha)})| \geq
  C_2(\delta)$ for any $p \in {\Bbb T}^3 \setminus
  \bigcup\limits_{j=1}^{k_\alpha} U_\delta(q_{\alpha j}).$
\end{lem}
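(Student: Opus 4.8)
The plan is to prove both parts by a compactness-and-continuity argument, exploiting that the functions $\Delta_\alpha(\,\cdot\,; E_{\min}^{(\alpha)})$ and $\Delta_\alpha(\,\cdot\,; E_{\max}^{(\alpha)})$ are continuous on the compact torus ${\Bbb T}^3$ and that we have removed only finitely many open balls around the zeros. I will treat part (i) in detail; part (ii) is entirely parallel with $|\Delta_\alpha(\,\cdot\,; E_{\max}^{(\alpha)})|$ in place of $\Delta_\alpha(\,\cdot\,; E_{\min}^{(\alpha)})$ and $q_{\alpha j}$, $k_\alpha$ in place of $p_{\alpha i}$, $n_\alpha$.

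First I would establish continuity: as noted in the excerpt (via the Lebesgue dominated convergence theorem together with positivity and finiteness of the relevant integral), the function $\Delta_\alpha(\,\cdot\,; m)$ is continuous on ${\Bbb T}^3$. The same argument, with $E_{\min}^{(\alpha)} \le m$ in place of $m$ in the denominators $w_2(p,s) - E_{\min}^{(\alpha)}$, shows that $p \mapsto \Delta_\alpha(p\,; E_{\min}^{(\alpha)})$ is continuous on ${\Bbb T}^3$; here one uses that $E_{\min}^{(\alpha)} \le m \le m(p)$ for all $p$, so the denominator stays bounded away from zero uniformly and the integrand admits an integrable dominating function.

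Next I would fix $\delta > 0$ and form the set
$$
G_\delta := {\Bbb T}^3 \setminus \bigcup_{i=1}^{n_\alpha} U_\delta(p_{\alpha i}),
$$
which is closed, hence compact, as a closed subset of the compact torus ${\Bbb T}^3$. By Assumption $\ref{A3}$, the inequality $\Delta_\alpha(p\,; E_{\min}^{(\alpha)}) > 0$ holds for every $p \in {\Bbb T}^3 \setminus \{p_{\alpha 1}, \ldots, p_{\alpha n_\alpha}\}$, and in particular at every point of $G_\delta$ since $G_\delta$ contains none of the zeros. A continuous function that is strictly positive on a nonempty compact set attains a positive minimum there, so setting
$$
C_1(\delta) := \min_{p \in G_\delta} \Delta_\alpha(p\,; E_{\min}^{(\alpha)})
$$
gives $C_1(\delta) > 0$ and yields assertion (i) immediately. (If $G_\delta$ happens to be empty, the statement holds vacuously with any positive constant.) Assertion (ii) follows by the identical argument applied to the continuous nonnegative function $p \mapsto |\Delta_\alpha(p\,; E_{\max}^{(\alpha)})|$, which by Assumption $\ref{A4}$ is strictly positive away from the finite zero set $\{q_{\alpha 1}, \ldots, q_{\alpha k_\alpha}\}$.

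I do not expect a serious obstacle here, since the result is essentially an application of the extreme value theorem once continuity is in hand; the only point requiring genuine care is justifying continuity of $\Delta_\alpha(\,\cdot\,; E_{\min}^{(\alpha)})$ and $\Delta_\alpha(\,\cdot\,; E_{\max}^{(\alpha)})$, i.e.\ that the singular integrals defining them are uniformly controlled. The danger is that one of these spectral endpoints could approach the essential spectrum and spoil the uniform lower bound on the denominator, but since $E_{\min}^{(\alpha)} < m$ and $E_{\max}^{(\alpha)} < m$ in the relevant cases, the denominators $w_2(p,s) - E_{\min}^{(\alpha)}$ and $w_2(p,s) - E_{\max}^{(\alpha)}$ are bounded below by the strictly positive quantities $m - E_{\min}^{(\alpha)}$ and $m - E_{\max}^{(\alpha)}$, so the dominated convergence argument goes through and continuity is secure.
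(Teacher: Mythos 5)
Your proof is correct and takes essentially the same approach as the paper: the set ${\Bbb T}^3 \setminus \bigcup_{i=1}^{n_\alpha} U_\delta(p_{\alpha i})$ is compact, $\Delta_\alpha(\cdot\,; E_{\rm min}^{(\alpha)})$ is continuous and strictly positive on it by Assumption~\ref{A3}, so the extreme value theorem yields the positive constant $C_1(\delta)$, with part (ii) handled identically. Your additional justification of continuity via dominated convergence (using $E_{\rm min}^{(\alpha)}<m$ and $E_{\rm max}^{(\alpha)}<m$ to keep the denominators bounded away from zero) simply makes explicit a point the paper asserts without comment.
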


\begin{proof}
  Let Assumption \ref{A3} be fulfilled. Then the inequality
  $\Delta_\alpha(p\,; E_{\rm min}^{(\alpha)})>0$ holds for any
  ${\Bbb T}^3 \setminus \{p_{\alpha 1}, \ldots, p_{\alpha
    n_\alpha}\}.$ Since for any $\delta>0$ the set ${\Bbb T}^3
  \setminus \bigcup\limits_{i=1}^{n_\alpha} U_\delta(p_{\alpha i})$
  is compact and $\Delta_\alpha(\cdot\,; E_{\rm min}^{(\alpha)})$ is
  the positive continuous function on this set, there exists the
  number $C_1(\delta)>0$ such that the assertion (i) of lemma holds.
  Proof of assertion (ii) is similar.
\end{proof}

\begin{lem}\label{LEM3}
  Let part {\rm (ii)} of Assumption $\ref{A2}$ be fulfilled. Then
  there exist positive numbers $C_1, C_2, C_3$ and $\delta$ such
  that the following inequalities hold:\\
  {\rm (i)} $C_1(|p-p_0|^2+|q-p_0|^2) \leq w_2(p,q)-m \leq
  C_2(|p-p_0|^2+|q-p_0|^2),$ $p,q \in U_\delta(p_0);$\\
  {\rm (ii)} $w_2(p,q)-m \geq C_3,$ $(p,q) \not\in U_\delta(p_0)
  \times U_\delta(p_0).$
\end{lem}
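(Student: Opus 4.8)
The plan is to treat the two assertions separately: part (i) is a local statement in a neighbourhood of the non-degenerate minimum and will follow from a second-order Taylor expansion, whereas part (ii) is a global statement away from the minimum and will follow from a compactness argument. A single $\delta$ can serve both: I first fix $\delta$ to make (i) work, then derive $C_3$ for that same $\delta$.

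For part (i), I would pass to the local chart about $(p_0,p_0)$ given by the identification $(p,q)\mapsto x:=(p-p_0,\,q-p_0)\in {\Bbb R}^6$, which for $\delta<\pi$ turns $U_\delta(p_0)\times U_\delta(p_0)$ into a genuine Euclidean set and makes $|x|^2=|p-p_0|^2+|q-p_0|^2$. Since $(p_0,p_0)$ is a minimum of $w_2$, the gradient of $w_2$ vanishes there, and since the minimum is non-degenerate, the Hessian $B$ of $w_2$ at $(p_0,p_0)$ is a symmetric positive-definite $6\times 6$ matrix; writing $\lambda_{\min}>0$ and $\lambda_{\max}$ for its extreme eigenvalues, one has $\lambda_{\min}|x|^2\le \langle Bx,x\rangle\le \lambda_{\max}|x|^2$. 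By part (ii) of Assumption \ref{A2} the third order partial derivatives of $w_2$ are continuous, hence bounded on a compact neighbourhood of $(p_0,p_0)$, so Taylor's theorem with remainder furnishes a constant $C>0$ with
$$\left| w_2(p,q)-m-\tfrac12\langle Bx,x\rangle\right|\le C|x|^3,\qquad |x|\le\delta.$$
Combining the two estimates gives $\tfrac12\lambda_{\min}|x|^2-C|x|^3\le w_2(p,q)-m\le \tfrac12\lambda_{\max}|x|^2+C|x|^3$, and choosing $\delta$ so small that $C\delta\le\tfrac14\lambda_{\min}$ absorbs the cubic terms, yielding (i) with $C_1:=\tfrac14\lambda_{\min}$ and $C_2:=\tfrac12\lambda_{\max}+C\delta$.

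For part (ii), I would keep the $\delta$ just fixed and note that the set $({\Bbb T}^3)^2\setminus\big(U_\delta(p_0)\times U_\delta(p_0)\big)$ is closed in the compact space $({\Bbb T}^3)^2$, hence compact. The function $(p,q)\mapsto w_2(p,q)-m$ is continuous and, because $(p_0,p_0)$ is the \emph{unique} minimum of $w_2$, it is strictly positive at every point of this set, the only possible zero $(p_0,p_0)$ having been removed. A continuous positive function on a compact set attains a positive minimum, which I take as $C_3$.

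The only genuinely delicate point is part (i): one must select a single $\delta$ that keeps $U_\delta(p_0)$ inside a coordinate chart, forces the cubic remainder to be dominated by the quadratic term, and is then inherited by the compactness argument in (ii). The non-degeneracy of the minimum, i.e.\ the positive-definiteness of $B$, is exactly what produces the genuine lower bound $C_1>0$ rather than a mere $o(|x|^2)$ estimate, while the continuity of the third derivatives is precisely what is needed to bound the Taylor remainder by $C|x|^3$.
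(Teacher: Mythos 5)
Your proof is correct and follows essentially the same route as the paper: the paper invokes the Hadamard lemma to write $w_2(p,q)-m$ near $(p_0,p_0)$ as the Hessian quadratic form plus third-order terms with continuous coefficients, which is exactly your Taylor expansion with cubic remainder, and the non-degeneracy of the minimum then yields the two-sided quadratic bounds, with the compactness argument for (ii) left implicit in the paper but spelled out by you. The only cosmetic slip is that $p,q \in U_\delta(p_0)$ gives $|x|<\sqrt{2}\,\delta$ rather than $|x|\le \delta$, so your smallness condition on $\delta$ and the constants $C_1, C_2$ need adjusting by a factor of $\sqrt{2}$ — a harmless bookkeeping change.
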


\begin{proof}
  By part {\rm (ii)} of Assumption $\ref{A2}$ the all third order
  partial derivatives of $w_2(\cdot,\cdot)$ are continuous on
  $({\Bbb T}^3)^2$ and it has a unique non-degenerate minimum at the
  point $(p_0,p_0) \in ({\Bbb T}^3)^2.$ Then by the Hadamard lemma
  \cite{Zor04} there exists a $\delta$-neighborhood of the point $p_0
  \in {\Bbb T}^3$ such that the following decomposition holds:
  $$
  \begin{aligned}
    w_2(p,q) & = m+\frac{1}{2} \left((W_1(p-p_0), p-p_0) + 2(W_2(p-p_0),
      q-p_0)+ (W_1(q-p_0), q-p_0) \right)
    \\
    & \quad +\sum\limits_{|s|+|l|=3} H_{sl}(p,q) \prod_{i=1}^3
    (p^{(i)}-p_0^{(i)})^{s_i}(q^{(i)}-p_0^{(i)})^{l_i}, \quad p,q \in
    U_\delta(p_0),
  \end{aligned}
  $$
  where
  $$
  W_1:=\left( \frac{\partial^2 w_2(p_0,p_0)}{\partial p^{(i)}
      \partial p^{(j)}} \right)_{i,j=1}^3, \quad W_2:=\left(
    \frac{\partial^2 w_2(p_0,p_0)}{\partial p^{(i)} \partial q^{(j)}}
  \right)_{i,j=1}^3,
  $$
  $$
  s=(s_1,s_2,s_3),\quad l=(l_1,l_2,l_3),\quad |s|=s_1+s_2+s_3,\quad
  s_i, l_i \in \{0,1,2,3\},\quad  i=1,2,3,
  $$
  and $H_{sl}(\cdot,\cdot)$ with $|s|+|l|=3$ are continuous
  functions in $U_\delta(p_0) \times U_\delta(p_0).$ Therefore,
  there exist positive numbers $C_1, C_2, C_3$ such that (i) and (ii)
  hold true.
\end{proof}

\section{The Weinberg type system of integral equations}
In this section we derive an analogue of the Weinberg type system
of integral equations for the eigenvectors, corresponding to the
eigenvalues of $H,$ lying on the left of $m.$

Let $\tau_{\rm ess}(H)$ be the lower bound of the essential
spectrum of $H.$ It is clear that $\Delta_\alpha(p\,;z)>0$ for all
$p \in {\Bbb T}^3$ and $z \in (-\infty; \tau_{\rm ess}(H));$ if
$\max\limits_{p \in {\Bbb T}^3} \Delta_\alpha(p\,; m)<0,$ then
$\Delta_\alpha(p\,;z)<0$ for all $p \in {\Bbb T}^3$ and $z \in
(E_{\rm max}^{(\alpha)}; m).$ So ${\rm
  sign}(\Delta_\alpha(p\,;z))$ depends on the location of $z \in
(-\infty; m) \setminus \sigma_{\rm ess}(H)$ and does not depend on
$p \in {\Bbb T}^3.$ For $z \in (-\infty; m) \setminus \sigma_{\rm
  ess}(H)$ we set $\xi_\alpha(z):={\rm sign}(\Delta_\alpha(p\,;z)).$

Let for any $z \in (-\infty; m) \setminus \sigma_{\rm ess}(H)$ the
operator $W(z)$ act in the Hilbert space ${\mathcal H}$ as a $3
\times 3$ operator matrix with entries $W_{ij}(z): {\mathcal
  H}_j \to {\mathcal H}_i,$ $i,j=0,1,2$ defined by
$$
\begin{gathered}
  W_{00}(z)g_0=(1+z-w_0)g_0, \quad W_{01}(z)g_1=-\int_{{\Bbb
      T}^3} \frac{v_0(s)g_1(s)\,ds}{\sqrt{\xi_1(z) \Delta_1(s\,;z)}},
  \\
  W_{02}(z) \equiv 0, \quad (W_{10}(z)g_0)(p)=-\frac{\xi_1(z)
    v_0(p)g_0}
  {\sqrt{\xi_1(z) \Delta_1(p\,;z)}},
  \\
  (W_{11}(z)g_1)(p)  =\frac{\xi_1(z) v_1(p)} {2\sqrt{\xi_1(z)
      \Delta_1(p\,;z)}} \int_{{\Bbb
      T}^3} \frac{v_1(s)g_1(s)\,ds}{\sqrt{\xi_1(z) \Delta_1(s\,;z)} (w_2(p,s)-z)},
  \\
  (W_{12}(z)g_2)(p)  =-\frac{\xi_1(z) v_2(p)}{\sqrt{\xi_1(z)
      \Delta_1(p\,;z)}} \int_{{\Bbb T}^3} \int_{{\Bbb
      T}^3} \frac{v_1(s)v_2(t)g_2(s,t)\,ds\,dt}{\sqrt{\xi_2(z) \Delta_2(s\,;z)}(w_2(p,s)-z)},
  \\
  (W_{20}(z)g_0)(p,q)  =-\frac{v_1(p)(W_{10}(z)g_0)(q)+
    v_1(q)(W_{10}(z)g_0)(p)}{2(w_2(p,q)-z)},
\end{gathered}
$$
\begin{align*}
  (W_{21}(z)g_1)(p,q) & =-\frac{\xi_2(z) v_1(p) v_2(q)} {2(w_2(p,q)-z)
    \sqrt{\xi_2(z) \Delta_2(p\,;z)}} \int_{{\Bbb T}^3}
  \frac{v_2(s)g_1(s)\,ds}{\sqrt{\xi_1(z)
      \Delta_1(s\,;z)}(w_2(p,s)-z)}
  \\
  & \quad -\frac{\xi_2(z) v_1(q) v_2(p)}{2(w_2(p,q)-z) \sqrt{\xi_2(z)
      \Delta_2(q\,;z)}} \int_{{\Bbb T}^3}
  \frac{v_2(s)g_1(s)\,ds}{\sqrt{\xi_1(z)
      \Delta_1(s\,;z)}(w_2(q,s)-z)}
  \\
  & \quad -\frac{v_1(p)(W_{11}(z)g_1)(q)+
    v_1(q)(W_{11}(z)g_1)(p)}{2(w_2(p,q)-z)},
\end{align*}
\begin{align*}
  (W_{22}(z)g_2)(p,q) & =  \frac{\xi_2(z) v_2(p) v_2(q)}{(w_2(p,q)-z)
    \sqrt{\xi_2(z) \Delta_2(p\,;z)}} \int_{{\Bbb T}^3}
  \int_{{\Bbb T}^3} \frac{v_2(s)v_2(t) g_2(s,t)\,ds\,dt}
  {\sqrt{\xi_2(z) \Delta_2(s\,;z)}(w_2(p,s)-z)}
  \\
  & \quad +\frac{\xi_2(z) v_2(p) v_2(q)} {(w_2(p,q)-z) \sqrt{\xi_2(z)
      \Delta_2(q\,;z)}} \int_{{\Bbb T}^3} \int_{{\Bbb
      T}^3} \frac{v_2(s)v_2(t)g_2(s,t)\,ds\,dt}{\sqrt{\xi_2(z)
      \Delta_2(s\,;z)}(w_2(q,s)-z)}
  \\
  & \quad -\frac{v_1(p)(W_{12}(z)g_2)(q)+
    v_1(q)(W_{12}(z)g_2)(p)}{2(w_2(p,q)-z)},
\end{align*}
where $g_i \in {\mathcal H}_i,$ $i=0,1,2.$

We have the following lemma.

\begin{lem}\label{LEM4}
  Let Assumption $\ref{A1}$ and part {\rm (i)} of Assumption
  $\ref{A2}$ be fulfilled. If $f \in {\mathcal H}$ is an eigenvector
  corresponding to the eigenvalue $z \in (-\infty; m) \setminus
  \sigma_{\rm ess}(H)$ of $H,$ then $f$ satisfies the Weinberg
  equation $W(z)f=f.$
\end{lem}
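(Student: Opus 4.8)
The plan is to convert the eigenvalue equation $Hf=zf$ for $f=(f_0,f_1,f_2)$ into three coupled relations and then eliminate the top component. Writing out the three rows of $H$ gives, for the first row, $(w_0-z)f_0+\int_{{\Bbb T}^3}v_0(s)f_1(s)\,ds=0$; for the second, $v_0(p)f_0+(w_1(p)-z)f_1(p)+\int_{{\Bbb T}^3}v_1(s)f_2(p,s)\,ds=0$; and for the third, $\tfrac12(v_1(p)f_1(q)+v_1(q)f_1(p))+(w_2(p,q)-z)f_2(p,q)-v_2(q)\varphi(p)-v_2(p)\varphi(q)=0$, where I abbreviate $\varphi(p):=\int_{{\Bbb T}^3}v_2(s)f_2(p,s)\,ds$ and use the symmetry of $f_2$ to identify $\int_{{\Bbb T}^3}v_2(s)f_2(s,q)\,ds=\varphi(q)$.

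First I would solve the third relation for $f_2$. Since $z<m\le w_2(p,q)$, the factor $w_2(p,q)-z$ is strictly positive and bounded away from zero, so I may write
\[
f_2(p,q)=\frac{v_2(q)\varphi(p)+v_2(p)\varphi(q)}{w_2(p,q)-z}-\frac{v_1(p)f_1(q)+v_1(q)f_1(p)}{2(w_2(p,q)-z)}.
\]
Substituting this into the second row and into the defining integral for $\varphi$ yields closed relations for $f_1$ and $\varphi$. The crucial simplification is the identity $\int_{{\Bbb T}^3}\frac{v_1(s)v_2(s)}{w_2(p,s)-z}\,ds=0$, valid under Assumption \ref{A1} and part {\rm (i)} of Assumption \ref{A2} exactly as in the proof of Lemma \ref{LEM1}, which annihilates every cross term mixing $v_1$ and $v_2$. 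After this cancellation the self-interaction terms assemble into the Fredholm determinants: the $w_1$- and $v_1^2$-contributions produce $\Delta_1(p\,;z)f_1(p)=-v_0(p)f_0+\tfrac12 v_1(p)\int\frac{v_1(s)f_1(s)}{w_2(p,s)-z}\,ds-v_2(p)\int\frac{v_1(s)\varphi(s)}{w_2(p,s)-z}\,ds$, and the $v_2^2$-contributions produce the analogous relation $\Delta_2(p\,;z)\varphi(p)=v_2(p)\int\frac{v_2(s)\varphi(s)}{w_2(p,s)-z}\,ds-\tfrac12 v_1(p)\int\frac{v_2(s)f_1(s)}{w_2(p,s)-z}\,ds$.

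Next I would symmetrize. For fixed $z$ the sign $\xi_\alpha(z)$ of $\Delta_\alpha(p\,;z)$ is independent of $p$ and $\xi_\alpha(z)\Delta_\alpha(p\,;z)=|\Delta_\alpha(p\,;z)|>0$, so I may divide the two determinant relations by $\sqrt{\xi_\alpha(z)\Delta_\alpha(p\,;z)}$ and insert the same factor into each integral kernel, thereby rewriting each $\int\frac{v_i(s)(\cdots)(s)}{w_2(p,s)-z}\,ds$ in the symmetric form $\int\frac{v_i(s)(\cdots)(s)}{\sqrt{\xi_\alpha(z)\Delta_\alpha(s\,;z)}(w_2(p,s)-z)}\,ds$ carried by the entries of $W(z)$. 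Reinserting the closed formula for $f_2$ into the second and third rows then reconstructs the full matrix: the first two rows reproduce $W_{00},W_{01},W_{10},W_{11},W_{12}$, while the $f_2$-formula itself is responsible for the ``reconstruction'' tails $-\tfrac{v_1(p)(W_{1j}\,\cdot\,)(q)+v_1(q)(W_{1j}\,\cdot\,)(p)}{2(w_2(p,q)-z)}$ appearing in $W_{20},W_{21},W_{22}$. Verifying that the rearranged relations reproduce $W_{00},\dots,W_{22}$ row by row is then a matter of matching terms.

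I expect the main obstacle to be the last two rows, $W_{21}(z)$ and $W_{22}(z)$. These fuse two distinct mechanisms, namely the genuinely two-dimensional symmetrized double integrals coming from $\varphi$ together with the reconstruction terms obtained by re-substituting the formula for $f_2$, and each contribution must be split into its $p$- and $q$-symmetric halves carrying the correct $\sqrt{\xi_\alpha(z)\Delta_\alpha(\,\cdot\,;z)}$ weights. Keeping the bookkeeping of these cross terms consistent, while making sure the $v_1v_2$-cancellation is applied wherever it is needed and nowhere else, is the delicate part; everything else reduces to routine substitution.
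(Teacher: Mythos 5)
Your proposal follows essentially the same route as the paper's own proof: eliminate $f_2$ from the third row of $Hf=zf$, use the $v_1v_2$-orthogonality identity $\int_{{\Bbb T}^3}\frac{v_1(s)v_2(s)\,ds}{w_2(p,s)-z}=0$ (exactly as in Lemma \ref{LEM1}) to reduce to the two determinant relations for $f_1$ and $\overline{f}_2$ (your $\varphi$), symmetrize by the weights $\sqrt{\xi_\alpha(z)\Delta_\alpha(\,\cdot\,;z)}$, and re-substitute into the closed formula for $f_2$ to assemble $W(z)f=f$. Your intermediate equations coincide with the paper's system \eqref{Eq9}, and the final row-by-row matching that you defer as routine bookkeeping is left at the same level of detail in the paper itself.
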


\begin{proof}
  Let $z \in (-\infty; m) \setminus \sigma_{\rm ess}(H)$ be an
  eigenvalue of the operator $H$ and $f=(f_0,f_1,f_2) \in {\mathcal
    H}$ be the corresponding eigenvector. Then $f_0,$ $f_1$ and $f_2$
  satisfy the system of equations
  \begin{equation}\label{Eq6}
    \begin{gathered}
      (H_{00}-z)f_0+ H_{01}f_1=0,
      \\
      (H_{10}f_0)(p)+((H_{11}-z)f_1)(p)+(H_{12}f_2)(p)=0,
      \\
      (H_{21}f_1)(p,q)+((H_{22}^0-z)f_2)(p,q)-(Vf_2)(p,q)=0.
    \end{gathered}
  \end{equation}
  Since $z<m,$ from the third equation of the system \eqref{Eq6} for
  $f_2$ we have
  \begin{equation}\label{Eq7}
    f_2(p,q)=\frac{v_2(q) \overline{f}_2(p)+v_2(p)
      \overline{f}_2(q)}{w_2(p,q)-z}-\frac{v_1(q) f_1(p)+v_1(p)
      f_1(q)}{2(w_2(p,q)-z)},
  \end{equation}
  where
  \begin{equation}\label{Eq8}
    \overline{f}_2(p)=\int_{{\Bbb T}^3}v_2(s)f_2(p,s)\,ds.
  \end{equation}

  Substituting the expression \eqref{Eq7} for $f_2$ into the second
  equation in the system \eqref{Eq6} and the equality \eqref{Eq8} and
  using Assumptions \ref{A1} and \ref{A2}, we obtain
  \begin{equation}\label{Eq9}
    \begin{gathered}
      f_0=(1+z-w_0)f_0-\int_{{\Bbb T}^3} v_0(s)f_1(s)\,ds=0,
      \\
      \Delta_1(p\,;z)f_1(p)=-v_0(p)f_0+\frac{v_1(p)}{2}\int_{{\Bbb
          T}^3} \frac{v_1(s)f_1(s)\,ds}{w_2(p,s)-z}
      -v_2(p)\int_{{\Bbb T}^3}
      \frac{v_1(s)\overline{f}_2(s)\,ds}{w_2(p,s)-z},
      \\
      \Delta_2(p\,;z)\overline{f}_2(p)=-\frac{v_1(p)}{2}\int_{{\Bbb
          T}^3} \frac{v_2(s)f_1(s)\,ds}{w_2(p,s)-z}+v_2(p)\int_{{\Bbb
          T}^3} \frac{v_2(s)\overline{f}_2(s\,)\,ds}{w_2(p,s)-z}.
    \end{gathered}
  \end{equation}
  It is clear that the inequality $\xi_\alpha(z)
  \Delta_\alpha(p\,;z)>0$ holds for all $z
  \in (-\infty; m) \setminus \sigma_{\rm ess}(H)$ and $p \in {\Bbb T}^3.$
  Therefore, the
  system of equations \eqref{Eq9} has a nontrivial solution if and
  only if the following system of equations:
  \begin{align*}
    f_0 & = W_{00}(z)f_0+W_{01}(z)f_1=0,
    \\
    f_1(p) & = (W_{10}(z)f_0)(p)+ (W_{11}(z)f_1)(p)
    \\
    & \quad -\frac{\xi_1(z) v_2(p)}{\sqrt{\xi_1(z) \Delta_1(p\,;z)}}
    \int_{{\Bbb T}^3} \frac{v_1(s)\overline{f}_2(s)\,ds}
    {\sqrt{\xi_2(z) \Delta_2(s\,;z)} (w_2(p,s)-z)},
    \\
    \overline{f}_2(p) & = -\frac{\xi_2(z) v_1(p)}{2\sqrt{\xi_2(z)
        \Delta_2(p\,;z)}} \int_{{\Bbb T}^3} \frac{v_2(s) f_1(s)\,ds}
    {\sqrt{\xi_1(z) \Delta_1(s\,;z)} (w_2(p,s)-z)}
    \\
    & \quad + \frac{\xi_2(z) v_2(p)}{\sqrt{\xi_2(z) \Delta_2(p\,;z)}}
    \int_{{\Bbb T}^3} \frac{v_2(s)\overline{f}_2(s)\,ds}
    {\sqrt{\xi_2(z) \Delta_2(s\,;z)}(w_2(p,s)-z)}
  \end{align*}
  has a nontrivial solution.

  Substituting the last expressions for $f_1$ and $\overline{f}_2$
  into the formula \eqref{Eq7} and using the equality \eqref{Eq8},
  we obtain the Weinberg equation $W(z)f=f.$
\end{proof}

Set
$$
\Sigma:=\overline{[\tau_{\rm ess}(H)-1; m] \setminus \sigma_{\rm ess}(H)}.
$$

\begin{lem}\label{LEM5}
  Let assumptions in Theorem $\ref{THM4}$ be fulfilled.
  Then the operator $W(z)$ is compact for $z \in \Sigma$ and
  the operator-valued function $W(z)$ is continuous in the uniform
  operator topology for $z \in \Sigma.$
\end{lem}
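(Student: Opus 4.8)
The plan is to use that a $3\times 3$ operator matrix is compact, respectively continuous in the uniform operator topology on $\Sigma$, if and only if each of its nine entries $W_{ij}(z)$ has that property, and to examine the entries one at a time. The five entries that have the one-dimensional space ${\mathcal H}_0={\Bbb C}$ as domain or range are of rank at most one: $W_{02}(z)\equiv 0$, $W_{00}(z)$ is multiplication by the scalar $1+z-w_0$, the functionals $W_{01}(z)$ and $W_{10}(z)$ are built from $v_0(\cdot)/\sqrt{\xi_1(z)\Delta_1(\cdot\,;z)}$, and $W_{20}(z)$ has range spanned by the single function $(p,q)\mapsto [v_1(p)(W_{10}(z)1)(q)+v_1(q)(W_{10}(z)1)(p)]/(2(w_2(p,q)-z))$. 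For each of these it suffices to check that the generating vector lies in the appropriate $L_2$-space and depends continuously on $z$ there. The four remaining entries $W_{11}(z),W_{12}(z),W_{21}(z),W_{22}(z)$, with indices in $\{1,2\}$, are integral operators, and I would prove that each is Hilbert--Schmidt by estimating its kernel in $L_2$ of the relevant product of tori; this yields compactness, while continuity of the kernel as an $L_2$-valued function of $z$ yields norm-continuity, because the Hilbert--Schmidt norm dominates the operator norm.

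Two kinds of singular factors appear in these vectors and kernels: $1/(w_2(\cdot,\cdot)-z)$, which is dangerous only at $z=m$, when its arguments approach $(p_0,p_0)$; and $1/\sqrt{\xi_\alpha(z)\Delta_\alpha(\cdot\,;z)}$, which is dangerous only at the endpoint energies $E_{\rm min}^{(\alpha)}$ and $E_{\rm max}^{(\alpha)}$, where $\Delta_\alpha$ vanishes at the finitely many points $p_{\alpha i}$, $q_{\alpha j}$. The structural observation on which everything rests is that these two families never become singular at the same value of $z$: the critical energies of $\Delta_\alpha$ satisfy $E_{\rm min}^{(\alpha)}<m$ and $E_{\rm max}^{(\alpha)}<m$ in cases $(\alpha.2)$--$(\alpha.3)$, so there $w_2(p,q)-z\ge m-z>0$ is bounded below, whereas $z=m$ can lie in $\Sigma$ only when neither index is in case $(\alpha.2)$ (otherwise $\sigma_\alpha$ fills a whole left neighbourhood of $m$ by Theorem \ref{THM2}), and then the continuous function $\Delta_\alpha(\cdot\,;m)$ is bounded away from zero on the compact torus. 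Moreover, since $\Delta_\alpha(p\,;\cdot)$ is decreasing on $(-\infty;m]$, the Hölder bounds of Assumptions \ref{A3} and \ref{A4} at the endpoints propagate to uniform lower bounds on $|\Delta_\alpha(p\,;z)|$ over the one-sided $z$-ranges that meet $\Sigma$ there, while Lemma \ref{LEM2}, resp. Lemma \ref{LEM6}, handles $z$ away from the endpoints. Thus, localising the parameter near each of the finitely many exceptional energies and over the compact remainder, I may work with a $z$-independent majorant in which at most one type of singularity is present.

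The whole weight of the argument then falls on two integrability computations for the majorized kernels, and this is the step I expect to be the main obstacle. In the regime of an endpoint energy the only singular factor is $1/\sqrt{\Delta_\alpha}$; in each kernel a given point $p_{\alpha i}$ (or $q_{\alpha j}$) enters to the first power in a single torus variable, so squaring produces $|p-p_{\alpha i}|^{-\beta_{\alpha i}}$ (resp. $|p-q_{\alpha j}|^{-\gamma_{\alpha j}}$) by Assumption \ref{A3} (resp. \ref{A4}), which is integrable over ${\Bbb T}^3$ precisely because $\beta_{\alpha i},\gamma_{\alpha j}\le 2<3$. In the regime $z=m$ the only singular factor is $1/(w_2-m)$, controlled by the nondegeneracy estimate $w_2(p,q)-m\asymp |p-p_0|^2+|q-p_0|^2$ of Lemma \ref{LEM3}. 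For the kernels of $W_{11}$ and $W_{12}$ this factor carries both of its arguments as integration variables and is square-integrable over $({\Bbb T}^3)^2$ as a six-dimensional quadratic singularity; for the genuinely multi-variable kernels of $W_{21}$ and $W_{22}$ a single variable $p$ sits in two such denominators, and performing the remaining integrations first converts each denominator into a factor $\asymp |p-p_0|^{-1}$, whose product $|p-p_0|^{-2}$ is still integrable over the three-dimensional $p$-torus. The fact that no variable ever appears in three $w_2$-denominators, together with the dimension count $2<3$, is exactly what makes the estimate close, and it is forced by the quadratic (nondegenerate) minimum required in part (ii) of Assumption \ref{A2}.

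Finally, for continuity in the uniform operator topology I would fix $z_0\in\Sigma$ and let $z\to z_0$ within $\Sigma$. On every kernel the integrand converges pointwise, because $\Delta_\alpha(\cdot\,;z)$ and $w_2(\cdot,\cdot)-z$ are continuous in $z$ and keep their sign on the region in question, and the local majorant constructed above furnishes an integrable dominating function; the dominated convergence theorem then gives $\|W_{ij}(z)-W_{ij}(z_0)\|_{\rm HS}\to 0$, hence convergence in operator norm. Since $\Sigma$ is a finite union of closed intervals and $\xi_\alpha$ is constant on each of them, the signs cause no difficulty, and assembling the nine entries yields both the compactness of $W(z)$ and its norm-continuity on $\Sigma$.
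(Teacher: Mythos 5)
Your proposal is correct and follows essentially the same route as the paper's proof: the entries touching ${\mathcal H}_0$ are dismissed as rank-one operators, the entries $W_{ij}(z)$ with $i,j\in\{1,2\}$ are shown to be Hilbert--Schmidt by majorizing their kernels via Assumptions \ref{A3}, \ref{A4}, Lemmas \ref{LEM2}, \ref{LEM6}, \ref{LEM3} and the monotonicity of $\Delta_\alpha(p\,;\cdot)$, with exactly your dimension counts ($\beta_{\alpha i},\gamma_{\alpha j}\le 2<3$ near the endpoint energies, the non-degenerate quadratic minimum of $w_2$ near $z=m$), and norm continuity is obtained by dominated convergence on the kernels. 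The only difference is presentational: the paper enumerates six possible shapes of $\Sigma$ and writes out the majorants for one representative case over three $z$-regimes, whereas you treat all cases uniformly through the (correct) structural observation that the singularities of $1/(w_2-z)$ and of $1/\sqrt{\xi_\alpha\Delta_\alpha}$ never occur at the same value of $z$.
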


\begin{proof}
  First for the convenience using Theorem \ref{THM3} we describe the structure of the set~$\Sigma:$\\
  (i) if $\min\limits_{p \in {\Bbb T}^3}\Delta_\alpha(p\,; m)\geq 0,$ then $\Sigma=[m-1; m];$\\
  (ii) if $\min\limits_{p \in {\Bbb T}^3}\Delta_\alpha(p\,; m)<0$ and
  $\max\limits_{p \in {\Bbb T}^3}\Delta_\alpha(p\,; m)\geq 0,$ then $\Sigma=[E_{\rm min}-1; E_{\rm min}],$
  where $E_{\rm min}=\min\{E_{\rm min}^{(1)}, E_{\rm min}^{(2)}\}$ and $E_{\rm min}<m;$\\
  (iii) if $\min\limits_{p \in {\Bbb T}^3}\Delta_\alpha(p\,; m)<0,$
  $\max\limits_{p \in {\Bbb T}^3}\Delta_\alpha(p\,; m)\geq 0$ and
  $\min\limits_{p \in {\Bbb T}^3}\Delta_\beta(p\,; m)\geq 0,$ then
  $\Sigma=[E_{\rm min}^{(\alpha)}-1; E_{\rm min}^{(\alpha)}]$
  with $E_{\rm min}^{(\alpha)}<m;$\\
  (iv) if $\max\limits_{p \in {\Bbb T}^3}\Delta_\alpha(p\,; m)<0$ and
  $\min\limits_{p \in {\Bbb T}^3}\Delta_\beta(p\,; m)\geq 0,$ then
  $\Sigma=[E_{\rm min}^{(\alpha)}-1; E_{\rm min}^{(\alpha)}] \cup [E_{\rm max}^{(\alpha)}; m]$
  with $E_{\rm max}^{(\alpha)}<m;$\\
  (v) if $\max\limits_{p \in {\Bbb T}^3}\Delta_\alpha(p\,; m)<0,$
  $\min\limits_{p \in {\Bbb T}^3}\Delta_\beta(p\,; m)<0$ and
  $\max\limits_{p \in {\Bbb T}^3}\Delta_\beta(p\,; m)\geq 0,$ then
  $$
  \Sigma=
  \left \lbrace
    \begin{array}{ll}
      \qquad\quad\,\, [E_{\rm min}-1; E_{\rm min}], \qquad \quad\,\,\,\quad \mbox{if}\,\,
      E_{\rm max}^{(\alpha)} \geq E_{\rm min}^{(\beta)}, \\

      [E_{\rm min}^{(\alpha)}-1; E_{\rm min}^{(\alpha)}] \cup
      [E_{\rm max}^{(\alpha)}; E_{\rm min}^{(\beta)}], \,\,\quad \mbox{if}\,\, E_{\rm max}^{(\alpha)}< E_{\rm min}^{(\beta)}
    \end{array}\right.
  $$
  with $E_{\rm max}^{(\alpha)}, E_{\rm min}^{(\beta)}<m;$\\
  (vi) if $\max\limits_{p \in {\Bbb T}^3}\Delta_\alpha(p\,; m)<0,$
  then $\Sigma=[\tau_{\rm ess}(H)-1; m] \setminus \{(E_{\rm min}^{(1)}; E_{\rm max}^{(1)}) \cup
  (E_{\rm min}^{(2)}; E_{\rm max}^{(2)}) \}$ with $E_{\rm max}^{(\alpha)}<m.$

  We will prove the statement of the lemma for the case (vi) with $E_{\rm min}:=E_{\rm min}^{(1)}=E_{\rm min}^{(2)}$
  and $E_{\rm max}:=E_{\rm max}^{(1)}=E_{\rm max}^{(2)}.$
  Other cases can be proven in a similar.

  Let $\max\limits_{p \in {\Bbb T}^3}\Delta_\alpha(p\,; m)<0$ and Assumptions \ref{A3}, \ref{A4} be fulfilled.
  For $z \in (-\infty; m) \setminus \sigma_{\rm ess}(H)$ denote by $W(p,q,s,t;z)$ the kernel of the
  operator $W_{22}(z).$

  We have the following inequalities:
  $$
  w_2(p,q)-z \geq m-E_{\rm min}>0 \quad \mbox{for all} \quad p,q \in {\Bbb T}^3, \quad z \leq E_{\rm min};
  $$
  $$
  w_2(p,q)-z \geq (m-E_{\rm max})/2>0 \quad \mbox{for all} \quad p,q \in {\Bbb T}^3, \quad z \in
  [E_{\rm max}; (m+E_{\rm max})/2].
  $$

  Then by Assumptions \ref{A3}, \ref{A4} and Lemma \ref{LEM6}
  the function $|W(\cdot,\cdot,\cdot,\cdot;z)|$ can be estimated by
  \begin{align*}
    C_1 & \times
    \left(1\!+\!\sum_{i=1}^{n_2}\frac{\chi_{\delta}(s-p_{2i})}{|s-p_{2i}|^{\beta_{2i}/2}}\right)\\
    & \times
    \left(1\!+\!\sum_{i=1}^{n_1}\frac{\chi_{\delta}(p-p_{1i})}{|p-p_{1i}|^{\beta_{1i}/2}}
      \!+\!\sum_{i=1}^{n_1}\frac{\chi_{\delta}(q-p_{1i})}{|q-p_{1i}|^{\beta_{1i}/2}}
      \!+\!\sum_{i=1}^{n_2}\frac{\chi_{\delta}(p-p_{2i})}{|p-p_{2i}|^{\beta_{2i}/2}}
      \!+\!\sum_{i=1}^{n_2}\frac{\chi_{\delta}(q-p_{2i})}{|q-p_{2i}|^{\beta_{2i}/2}}
    \right)
  \end{align*}
  for $z \leq E_{\rm min}$ and by
  \begin{align*}
    C_2 & \times
    \left(1\!+\!\sum_{j=1}^{k_2}\frac{\chi_{\rho}(s-q_{2j})}{|s-q_{2j}|^{\gamma_{2j}/2}}\right)\\
    & \times
    \left(1\!+\!\sum_{j=1}^{k_1}\frac{\chi_{\rho}(p-q_{1j})}{|p-q_{1j}|^{\gamma_{1j}/2}}
      \!+\!\sum_{j=1}^{k_1}\frac{\chi_{\rho}(q-q_{1j})}{|q-q_{1j}|^{\gamma_{1j}/2}}
      \!+\!\sum_{j=1}^{k_2}\frac{\chi_{\rho}(p-q_{2j})}{|p-q_{2j}|^{\gamma_{2j}/2}}
      \!+\!\sum_{j=1}^{k_2}\frac{\chi_{\rho}(q-q_{2j})}{|q-q_{2j}|^{\gamma_{2j}/2}}
    \right)
  \end{align*}
  for $z\in [E_{\rm max}, (m+E_{\rm max})/2],$ where $\chi_{\delta}(\cdot)$ is the characteristic
  function of $U_\delta(\overline{0}).$

  Since $\xi_\alpha(z)=1$ for any $z \in (E_{\rm max}; m)$ and $\max\limits_{p \in {\Bbb T}^3} \Delta_\alpha(p\,; m)<0,$
  for any $z \in (E_{\rm max}; m)$ we have $\max\limits_{p \in {\Bbb T}^3} (\xi_\alpha(z)\Delta_\alpha(p\,; m))>0.$
  Therefore, Lemmas \ref{LEM2} and \ref{LEM3}
  imply that the function $|W(\cdot,\cdot,\cdot,\cdot;z)|$ can be estimated by
  \begin{align*}
    C_3 \left(1+\frac{\chi_{\delta}(p-p_0) \chi_{\delta}(q-p_0)}{|p-p_0|^2+|q-p_0|^2} \right)
    \left(1+\frac{\chi_{\delta}(p-p_0) \chi_{\delta}(s-p_0)}{|p-p_0|^2+|s-p_0|^2}
      +\frac{\chi_{\delta}(q-p_0) \chi_{\delta}(s-p_0)}{|q-p_0|^2+|s-p_0|^2}
    \right)
  \end{align*}
  for $z \in [(m+E_{\rm max})/2; m].$

  The latter three functions are square integrable on $({\Bbb T}^3)^4$ and hence
  the operator $W_{22}(z)$ is Hilbert Schmidt for any $z \in
  (-\infty; E_{\rm min}] \cup [E_{\rm max}; m].$

  A similar argument shows that the operators $W_{11}(z),$ $W_{12}(z)$ and $W_{21}(z)$
  are also Hilbert Schmidt for any $z \in \Sigma.$

  For any $z \in (-\infty; m) \setminus \sigma_{\rm ess}(H)$ the kernel function of $W_{ij}(z),$
  $i,j=1,2$ is continuous on its domain. Therefore the continuity of the operator-valued functions
  $W_{ij}(z),$ $i,j=1,2$ in the uniform operator topology for $z \in \Sigma$ follows
  from Lebesgue's dominated convergence theorem.

  Since for all $z \in \Sigma$ the operators $W_{00}(z),$ $W_{01}(z),$ $W_{10}(z)$ and $W_{20}(z)$
  are of rank 1 and continuous in the uniform operator topology for $z \in \Sigma,$ one concludes that
  $W(z)$ is compact for $z \in \Sigma$ and the operator-valued function $W(z)$ is continuous
  in the uniform operator topology for $z \in \Sigma.$
\end{proof}

\section{Proof of the main results}

In this section we prove Theorems \ref{THM3} and \ref{THM4}.

\begin{proof}[Proof of Theorem $\ref{THM3}$] Let $\alpha=1.$
  If $z_1 \in {\Bbb C} \setminus \sigma_{\rm ess}(H_1)$ is an eigenvalue of the operator $H_1$
  and $f=(f_0,f_1,f_2) \in {\mathcal H}$ is the corresponding
  eigenvector, then $f_0,$ $f_1$ and $f_2$ are satisfy the following system of
  equations:
  \begin{equation}\label{Eq10}
    \begin{gathered}
      (w_0-z_1)f_0+\int_{{\Bbb T}^3} v_0(s)f_1(s)\,ds=0,
      \\
      v_0(p)f_0+(w_1(p)-z_1)f_1(p)+\int_{{\Bbb T}^3}
      v_1(s)f_2(p,s)\,ds=0,
      \\
      \frac{1}{2}(v_1(p)f_1(q)+v_1(q)f_1(p))+(w_2(p,q)-z_1)f_2(p,q)=0.
    \end{gathered}
  \end{equation}
  Since $z_1 \not \in [m; M],$ from the third equation of the system
  \eqref{Eq10} for $f_2$ we have
  \begin{equation}\label{Eq11}
    f_2(p,q)=-\frac{v_1(p)f_1(q)+v_1(q)f_1(p)}{2(w_2(p,q)-z_1)}.
  \end{equation}

  Substituting the expression \eqref{Eq11} for $f_2$ into the second
  equation of the system \eqref{Eq10}, we obtain
  $$
  \Delta_1(p\,;z_1) f_1(p)=\frac{v_1(p)}{2}
  \int_{{\Bbb T}^3} \frac{v_1(s)f_1(s)\,ds}{w_2(p,s)-z_1}-v_0(p)f_0.
  $$

  Since $z_1 \not \in \sigma_{\rm ess}(H_1)$ the inequality $\Delta_1(p\,;z_1) \neq 0$
  holds for all $p \in {\Bbb T}^3.$ From the last equation we have
  $$
  f_1(p)=\frac{v_1(p)}{2 \Delta_1(p\,;z_1)}
  \int_{{\Bbb T}^3} \frac{v_1(s)f_1(s)\,ds}{w_2(p,s)-z_1}-
  \frac{v_0(p)f_0}{\Delta_1(p\,;z_1)}.
  $$

  The functions $v_0(\cdot),$ $v_1(\cdot),$ $w_1(\cdot)$ and
  $w_2(\cdot,q),$ $q \in {\Bbb T}^3$ are $2 \bar{\pi}$ periodic and hence
  the function $f_1(\cdot)$ is also $2 \bar{\pi}$ periodic.
  Therefore, for any fixed $p \in {\Bbb T}^3$ the
  function $f_2(p,\cdot)$ defined by \eqref{Eq11}, is $2 \bar{\pi}$ periodic. Hence this
  function satisfies the condition \eqref{Eq2}, that is, $Vf_2=0.$
  So the number $z_1 \in \sigma_{\rm disc}(H_1)$ is an eigenvalue of $H$
  with the same eigenvector $f=(f_0,f_1,f_2) \in {\mathcal H}.$
  Therefore, $\sigma_{\rm disc}(H_1) \subset \sigma_{\rm p}(H).$

  Let now $z_2 \in \sigma_{\rm disc}(H_2)$ and $g_2 \in {\mathcal H}_2$ be the
  eigenfunction corresponding to the discrete eigenvalue $z_2.$ Then similar
  analysis shows that $H_{12}g_2=0,$ which guarantee that the number $z_2 \in \sigma_{\rm disc}(H_2)$
  is an eigenvalue of $H$ and corresponding eigenvector $g$ has form $g=(0,0,g_2) \in {\mathcal H},$
  that is, $\sigma_{\rm disc}(H_2) \subset \sigma_{\rm p}(H).$
  Theorem \ref{THM3} is proved.
\end{proof}

\begin{proof}[Proof of Theorem $\ref{THM4}$]
  We prove the finiteness of the number of discrete eigenvalues located on the left of
  $m$ for the case when $\max\limits_{p \in {\Bbb T}^3}\Delta_\alpha(p\,; m)<0.$
  Other cases can be proven similarly. Suppose that the operator $H$ has an
  infinite number of discrete eigenvalues $\left( E_k \right)_{k \in {\Bbb N}} \subset (E_{\rm max}; m).$
  Then three cases are possible

  (i) $\lim\limits_{k \to \infty} E_k=m;$

  (ii) $\lim\limits_{k \to \infty} E_k=E_{\rm max};$

  (iii) there exist $\left( E_k' \right)_{k \in {\Bbb N}}, \left( E_k'' \right)_{k \in {\Bbb N}}
  \subset \left( E_k \right)_{k \in {\Bbb N}}$ such that $\lim\limits_{k \to \infty} E_k'=m$
  and $$\lim\limits_{k \to \infty} E_k''=E_{\rm max}.$$

  Let us consider the case (iii). For each $k \in {\Bbb N}$ we denote by $\varphi_k \in {\mathcal H}$
  and $\psi_k \in {\mathcal H}$ the orthonormal
  eigenvectors corresponding to the eigenvalues $E_k'$ and $E_k'',$ respectively.
  Then it follows from Lemma \ref{LEM4} that $\varphi_k=W(E_k')\varphi_k$ and $\psi_k=W(E_k'')\psi_k$
  for any $k \in {\Bbb N}.$ By virtue of Lemma \ref{LEM5}
  the operators $W(E_{\rm max}), W(m)$ are compact and $\|W(z)-W(E_{\rm max})\| \to 0$ and
  $\|W(z)-W(m)\| \to 0$ as $z \to E_{\rm max}+0$ and $z \to m-0,$ respectively.
  Therefore,
  \begin{align*}
    & 1=\|\varphi_k\|=\|W(E_k')\varphi_k\| \leq \|(W(E_k')-W(E_{\rm max}))\varphi_k\|+
    \|W(E_{\rm max})\varphi_k\| \to 0,\\
    & 1=\|\psi_k\|=\|W(E_k'')\psi_k\| \leq \|(W(E_k'')-W(m))\psi_k\|+
    \|W(m)\psi_k\| \to 0
  \end{align*}
  as $k \to \infty.$ This contradiction implies that the points $z=E_{\rm max}$ and $z=m$
  can not be limit points of the set of discrete eigenvalues of $H$ belonging to the interval
  $(E_{\rm max}; m).$ Similar arguments show that other edges of $\Sigma$ are also cannot
  be accumulation point for the set of discrete eigenvalues of $H$ smaller than $m.$
\end{proof}

The following example shows that the class of functions $w_1(\cdot),$ $v_i(\cdot),$ $i=1,2$ and
$w_2(\cdot,\cdot)$ satisfying the conditions of Theorem
$\ref{THM4}$ is nonempty.

\begin{lem}\label{Example}
  Let $$ \widehat{v}_1(p):=\!\sum\limits_{i=1}^3 c_i \cos p^{(i)},
  \quad \widehat{v}_2(p):=\!\sum\limits_{i=1}^3 d_i \cos
  (p^{(i)}/2), \quad v_\alpha(p):=\!\!\sqrt{2^{2-\alpha}
    \mu_\alpha}\, \widehat{v}_\alpha(p), \quad \alpha=1,2, $$ $$
  w_1(p) \equiv 1, \quad w_2(p,q)=\varepsilon(p)+\varepsilon(q),
  \quad \varepsilon(p)= \sum\limits_{i=1}^3 (1- \cos p^{(i)}), $$
  where $\mu_\alpha>0;$ $c_i, d_i,$ $i=1,2,3$ are arbitrary real
  numbers.

  Set
  $$
  \mu_\alpha^{(0)}:=\Bigg( \int_{{\Bbb T}^3} \frac{\widehat{v}_\alpha^2(s)\,ds}
  {\varepsilon(s)} \Bigg)^{-1}, \quad \mu_\alpha^{(1)}:=
  \Bigg( \int_{{\Bbb T}^3} \frac{\widehat{v}_\alpha^2(s)\,ds}
  {6+\varepsilon(s)} \Bigg)^{-1}.
  $$

  Then the functions $w_1(\cdot),$ $v_\alpha(\cdot),$ $\alpha=1,2$ and $w_2(\cdot,\cdot)$
  are satisfy Assumptions $\ref{A1}$, $\ref{A2}$, $\ref{A3}$, $\ref{A4}$. Moreover,

  {\rm (i)} if $0<\mu_\alpha<\mu_\alpha^{(0)},$ then $\min\limits_{p \in {\Bbb T}^3}\Delta_\alpha(p\,;
  m) > 0;$

  {\rm (ii)} if $\mu_\alpha^{(0)}< \mu_\alpha \leq \mu_\alpha^{(1)},$ then
  $\min\limits_{p \in {\Bbb T}^3}\Delta_\alpha(p\,; m)<0$ and
  $\max\limits_{p \in {\Bbb T}^3}\Delta_\alpha(p\,; m)\geq 0;$

  {\rm (iii)} if $\mu_\alpha > \mu_\alpha^{(1)},$ then
  $\max\limits_{p \in {\Bbb T}^3}\Delta_\alpha(p\,; m)<0.$
\end{lem}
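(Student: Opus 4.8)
The plan is to reduce both Fredholm determinants to functions of the single scalar $\varepsilon(p)$ and then read off everything from monotonicity. First I record the data of the model. Since $\varepsilon(p)=\sum_{i=1}^{3}(1-\cos p^{(i)})$ is nonnegative, vanishes only at $p=0$ and attains its maximum $6$ only at $p=\bar{\pi}$, the symmetric function $w_2(p,q)=\varepsilon(p)+\varepsilon(q)$ has a unique minimum $m=0$ at $(p_0,p_0)=(0,0)$ and maximum $M=12$. From $\varepsilon(p)=\frac12|p|^{2}+O(|p|^{4})$ the Hessian of $\varepsilon$ at $0$ is the identity, so the minimum is non-degenerate, while $w_1\equiv 1$ and $w_2$ are finite trigonometric sums and hence have continuous third derivatives; this gives Assumption \ref{A2}. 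For Assumption \ref{A1} I note that $\widehat v_1$, $w_1$ and $w_2$ are built from $\cos(\cdot)$ and are therefore $2\bar{\pi}$ periodic, whereas $\widehat v_2$ is $2\bar{\pi}$ antiperiodic, $\widehat v_2(\,\cdot+2\bar{\pi})=-\widehat v_2(\cdot)$; the substitution $s\mapsto s+2\bar{\pi}$ then forces $\int v_2(s)g(s)\,ds=0$ for every $2\bar{\pi}$ periodic $g$, which is precisely \eqref{Eq2}.

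The computational core is that, for fixed $z<m$, each determinant depends on $p$ only through $u=\varepsilon(p)$: since $w_1\equiv 1$ and $w_2(p,s)=\varepsilon(p)+\varepsilon(s)$,
$$
\Delta_\alpha(p\,;z)=F_\alpha(\varepsilon(p)\,;z),\qquad
F_\alpha(u\,;z):=c_\alpha(z)-\mu_\alpha\int_{{\Bbb T}^3}\frac{\widehat v_\alpha^2(s)\,ds}{u+\varepsilon(s)-z},
$$
with $c_1(z)=1-z$ and $c_2(z)=1$. The map $u\mapsto F_\alpha(u\,;z)$ is strictly increasing, because $\partial_u F_\alpha=\mu_\alpha\int_{{\Bbb T}^3}\widehat v_\alpha^2(s)\,(u+\varepsilon(s)-z)^{-2}\,ds>0$. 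Setting $z=m=0$ and using $\varepsilon(p)\in[0,6]$, the extrema of $\Delta_\alpha(\cdot\,;0)$ occur at $p=0$ (where $\varepsilon=0$) and at $p=\bar{\pi}$ (where $\varepsilon=6$), so that
$$
\min_{p}\Delta_\alpha(p\,;0)=1-\mu_\alpha\int_{{\Bbb T}^3}\frac{\widehat v_\alpha^2(s)\,ds}{\varepsilon(s)}=1-\frac{\mu_\alpha}{\mu_\alpha^{(0)}},\qquad
\max_{p}\Delta_\alpha(p\,;0)=1-\mu_\alpha\int_{{\Bbb T}^3}\frac{\widehat v_\alpha^2(s)\,ds}{6+\varepsilon(s)}=1-\frac{\mu_\alpha}{\mu_\alpha^{(1)}}.
$$
As $\mu_\alpha^{(0)}<\mu_\alpha^{(1)}$ (the denominator $\varepsilon(s)$ makes the first integral larger), the alternatives (i)--(iii) follow immediately by comparing $\mu_\alpha$ with $\mu_\alpha^{(0)}$ and $\mu_\alpha^{(1)}$.

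It remains to check Assumptions \ref{A3} and \ref{A4}, which matter only when $E_{\rm min}^{(\alpha)}$, resp. $E_{\rm max}^{(\alpha)}$, exist. Using that the unique root $u^{*}(z)$ of $F_\alpha(\cdot\,;z)$ increases with $z$, I would identify $E_{\rm min}^{(\alpha)}$ as the value with $u^{*}=0$ and $E_{\rm max}^{(\alpha)}$ as the value with $u^{*}=6$; hence $\Delta_\alpha(\cdot\,;E_{\rm min}^{(\alpha)})$ vanishes exactly on $\{\varepsilon=0\}=\{0\}$ and $\Delta_\alpha(\cdot\,;E_{\rm max}^{(\alpha)})$ exactly on $\{\varepsilon=6\}=\{\bar{\pi}\}$, so $n_\alpha=k_\alpha=1$ with $p_{\alpha1}=0$ and $q_{\alpha1}=\bar{\pi}$. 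Near $p=0$ the estimate $\varepsilon(p)\ge C|p|^{2}$ from Lemma \ref{LEM3} together with $\partial_u F_\alpha(0\,;E_{\rm min}^{(\alpha)})>0$ yields $\Delta_\alpha(p\,;E_{\rm min}^{(\alpha)})\ge c\,\varepsilon(p)\ge C'|p|^{2}$, i.e. Assumption \ref{A3} with $\beta_{\alpha1}=2$, positivity away from $0$ being immediate from monotonicity. Symmetrically, from $\varepsilon(p)=6-\frac12|p-\bar{\pi}|^{2}+O(|p-\bar{\pi}|^{3})$ one gets $|\Delta_\alpha(p\,;E_{\rm max}^{(\alpha)})|\ge K|p-\bar{\pi}|^{2}$ with $\gamma_{\alpha1}=2$ and negativity elsewhere, i.e. Assumption \ref{A4}.

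The bulk of the argument is monotonicity bookkeeping once the reduction $\Delta_\alpha=F_\alpha\circ\varepsilon$ is in place; the two points that need care are the orthogonality relation \eqref{Eq2}, where plain Fourier orthogonality on $(-\pi,\pi]^3$ does not suffice and one genuinely must exploit the $2\bar{\pi}$ antiperiodicity of $\widehat v_2$, and the precise identification of $E_{\rm min}^{(\alpha)}$ and $E_{\rm max}^{(\alpha)}$ via the monotone root $u^{*}(z)$, on which the location of the zero sets entering Assumptions \ref{A3} and \ref{A4} entirely depends. I expect the orthogonality verification to be the main obstacle.
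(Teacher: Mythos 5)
Your proof is correct, and for Assumptions \ref{A1}, \ref{A2} and the trichotomy (i)--(iii) it coincides with the paper's: the relation \eqref{Eq2} is obtained by the same substitution $s\mapsto s+2\bar\pi$ using $\widehat v_2(\cdot+2\bar\pi)=-\widehat v_2(\cdot)$ (the paper's own verification is exactly your two-line argument, so the delicacy you flag is shared, not resolved, by the paper), and the values $\min_p\Delta_\alpha(p\,;m)=1-\mu_\alpha/\mu_\alpha^{(0)}$ and $\max_p\Delta_\alpha(p\,;m)=1-\mu_\alpha/\mu_\alpha^{(1)}$ are what the paper compresses into ``directly follow from the definition of $\mu_\alpha^{(0)}$ and $\mu_\alpha^{(1)}$''. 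The genuine difference is in Assumptions \ref{A3} and \ref{A4}: the paper differentiates $\Delta_\alpha(\cdot\,;E_{\rm min}^{(\alpha)})$ twice under the integral sign, checks that the Hessian at $\overline 0$ is diagonal with positive entries, and deduces $|\Delta_\alpha(p\,;E_{\rm min}^{(\alpha)})|\ge C|p|^2$ from the non-degenerate minimum together with $\Delta_\alpha(\overline 0\,;E_{\rm min}^{(\alpha)})=0$, arguing ``similarly'' at $\bar\pi$ for $E_{\rm max}^{(\alpha)}$; you instead exploit the factorization $\Delta_\alpha(p\,;z)=F_\alpha(\varepsilon(p)\,;z)$ with $F_\alpha(\cdot\,;z)$ strictly increasing, so that the zero sets $\{\overline 0\}$ and $\{\bar\pi\}$, the sign conditions off them, and the quadratic bounds (mean value theorem with $\partial_u F_\alpha$ bounded below on $[0,6]$ for $z<m$, plus $\varepsilon(p)\ge C|p|^2$ and $6-\varepsilon(p)\ge C|p-\bar\pi|^2$) all drop out of one monotonicity statement; this treats \ref{A3} and \ref{A4} symmetrically and avoids the Hessian computation, which is a cleaner and equally rigorous route. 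One point to tighten: your identification $u^*(E_{\rm max}^{(\alpha)})=6$ is valid only when $\max_p\Delta_\alpha(p\,;m)<0$, i.e.\ $\mu_\alpha>\mu_\alpha^{(1)}$; in the regime $\mu_\alpha^{(0)}<\mu_\alpha<\mu_\alpha^{(1)}$ one has $E_{\rm max}^{(\alpha)}=m$ and the zero set of $\Delta_\alpha(\cdot\,;m)$ is a whole level surface of $\varepsilon$, so Assumption \ref{A4} fails as literally stated (and is also not invoked by Theorem \ref{THM4} there), which means ``only when $E_{\rm max}^{(\alpha)}$ exists'' is not the right case split. The paper sidesteps this by verifying \ref{A3} only for $\mu_\alpha^{(0)}<\mu_\alpha\le\mu_\alpha^{(1)}$ and \ref{A4} only for $\mu_\alpha>\mu_\alpha^{(1)}$, and your write-up should make the same case distinction explicit.
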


\begin{proof}
  Let $g \in L_2({\Bbb T}^3)$ be as in Assumption \ref{A1}. Then we have
  $$
  \int_{{\Bbb T}^3} v_2(s)g(s)\,ds=\int_{{\Bbb T}^3}
  v_2(s+2\bar{\pi}) g(s+2\bar{\pi})\,ds=-\int_{{\Bbb
      T}^3} v_2(s)g(s)\,ds,
  $$
  which yields the equality \eqref{Eq2}, that is, Assumption \ref{A1} holds with
  $\alpha=1$ and $\beta=2.$

  From the definition of $w_2(\cdot,\cdot)$ it follows that this function has a unique zero
  non-degenerate minimum at $(\overline{0},\overline{0}) \in ({\Bbb T}^3)^2$
  and it satisfies all conditions of Assumption \ref{A2}.

  The assertions (i)--(iii) directly follow from the definition of the numbers
  $\mu_\alpha^{(0)}$ and $\mu_\alpha^{(1)}.$

  Let $\mu_\alpha^{(0)}< \mu_\alpha \leq \mu_\alpha^{(1)}.$
  We prove that the function $\Delta_\alpha(\cdot; E_{\rm min}^{(\alpha)})$ has a unique
  non-degenerate minimum at $\overline{0} \in {\Bbb T}^3.$
  Simple calculations show that $\Delta_\alpha(p; E_{\rm min}^{(\alpha)}) >
  \Delta_\alpha(\overline{0};$ $E_{\rm min}^{(\alpha)})$ for all $p \neq \overline{0}.$

  Since $E_{\rm min}^{(\alpha)} \in (-\infty, 0),$ it is clear that the function
  $\Delta_\alpha(\cdot\,; E_{\rm min}^{(\alpha)})$ is twice continuously differentiable in
  ${\bf T}^3.$ Moreover, from the equalities
  \begin{equation*}
    \begin{aligned}
      \frac{\partial^2 \Delta_\alpha(p\,; E_{\rm min}^{(\alpha)})}{\partial p^{(i)} \partial
        p^{(i)}} & =  \mu_\alpha \cos\, p^{(i)} \int \limits_{{\Bbb T}^3}
      \frac{\widetilde{v}_\alpha^2(s)\,ds}{(\varepsilon(p)+\varepsilon(s)-E_{\rm min}^{(\alpha)})^2}
      \\
      & \quad -2 \mu_\alpha (\sin\, p^{(i)})^2 \int_{{\Bbb T}^3}
      \frac{\widetilde{v}_\alpha^2(s)\,ds}{(\varepsilon(p)+\varepsilon(s)-E_{\rm min}^{(\alpha)})^3},
      \quad i=1,2,3,
      \\
      \frac{\partial^2 \Delta_\alpha(p\,; E_{\rm min}^{(\alpha)})}{\partial p^{(i)} \partial
        p^{(j)}}& = -2 \mu_\alpha \sin\, p^{(i)} \sin\, p^{(j)} \int_{{\Bbb T}^3}
      \frac{\widetilde{v}_\alpha^2(s)\,ds}{(\varepsilon(p)+\varepsilon(s)-E_{\rm min}^{(\alpha)})^3},
      \quad i \neq j, \quad i,j=1,2,3
    \end{aligned}
  \end{equation*}
  we get
  $$
  \frac{\partial^2 \Delta_\alpha(\overline{0}\,; E_{\rm
      min}^{(\alpha)})}{\partial p^{(i)}
    \partial p^{(i)}}>0, \quad \frac{\partial^2 \Delta_\alpha(\overline{0}\,; E_{\rm min}^{(\alpha)})}{\partial
    p^{(i)} \partial p^{(j)}}=0, \quad i \neq j, \quad i,j=1,2,3.
  $$

  Using these facts, one may verify that the matrix of the second order partial derivatives
  of the function $\Delta_\alpha(\cdot; E_{\rm min}^{(\alpha)})$
  at the point $p = \overline{0}$ is positive definite. Thus
  the function $\Delta_\alpha(\cdot; E_{\rm
    min}^{(\alpha)})$ has a non-degenerate minimum at the point $p = \overline{0}.$
  Then the equality $\Delta_\alpha(\overline{0}; E_{\rm
    min}^{(\alpha)})=0$ implies that there exist the numbers $\delta>0$ and
  $C>0$ such that
  $$
  |\Delta_\alpha(p\,; E_{\rm min}^{(\alpha)})|\geq C
  |p|^2,\quad p\in U_{\delta}(\overline{0}),
  $$
  that is, Assumption \ref{A3} holds with $n_\alpha=1,$
  $p_{\alpha 1}=\overline{0}$ and $\beta_{\alpha 1}=2.$

  In the case $\mu_\alpha > \mu_\alpha^{(1)}$ one can similarly show that
  there exist $\rho>0$ and
  $K>0$ such that
  $$
  |\Delta_\alpha(p\,; E_{\rm max}^{(\alpha)})|\geq K
  |p-\overline{\pi}|^2,\quad p\in U_{\rho}(\overline{\pi}),
  $$
  that is, Assumption \ref{A4} holds with $k_\alpha=1,$
  $q_{\alpha 1}=\overline{\pi}$ and $\gamma_{\alpha 1}=2.$
\end{proof}

{\it Acknowledgments.} This work was supported by the TOSCA
Erasmus Mundus grant. The author wishes to thank the University
of L'Aquila for the invitation and
hospitality.


\begin{thebibliography}{1000000}


\bibitem{AL}
  {Zh. I. Abdullaev, S. N. Lakaev},   
  \textit{Finiteness of the discrete spectrum
    of the three-particle Schr\"{o}dinger equation on a lattice}, 
  {Theor. Math. Phys.} 
  \textbf{111} 
  {(1997)},   
  {no.~1},    
  {467--479}. 


\bibitem{ALDj}
  {S. Albeverio, S. N. Lakaev, R. Kh. Djumanova},   
  \textit{The essential
    and discrete spectrum of a model operator associated to a system
    of three identical quantum particles}, 
  {Rep. Math. Phys.} 
  \textbf{63} 
  {(2009)},   
  {no.~3},    
  {359--380}. 


\bibitem{ALR}
  {S. Albeverio, S. N. Lakaev, T. H. Rasulov},   
  \textit{On the spectrum of an Hamiltonian in Fock space. Discrete
    spectrum asymptotics}, 
  {J.~Stat. Phys.} 
  \textbf{127} 
  {(2007)},   
  {no.~2},    
  {191--220}. 


\bibitem{ALR1}
  {S. Albeverio, S. N. Lakaev, T. H. Rasulov},   
  \textit{The Efimov effect
    for a model operator associated with the Hamiltonian of a non
    conserved number of particles}, 
  {Methods Funct. Anal. Topology} 
  \textbf{13} 
  {(2007)},   
  {no.~1},    
  {1--16}. 


\bibitem{Cha61}
  {S. Chandrasekhar},   
  \textit{Hydrodynamic and Hydromagnetic Stability},  
  {The International Series of Monographs on Physics}, 
  {Clarendon Press},    
  {Oxford},     
  {1998. Reprint of the 1992 edition}.     



\bibitem{Frid}
  {K. O. Friedrichs},   
  \textit{Perturbation of Spectra in Hilbert Space},  
  {Amer. Math. Soc.},    
  {Providence, RI},     
  {1965}.     




\bibitem{LM}
  {S. N. Lakaev, M. \'{E}. Muminov},   
  \textit{Essential and
    discrete spectra of the three-particle Schr\"{o}dinger operator on
    a lattices}, 
  {Theor. Math. Phys.} 
  \textbf{135} 
  {(2003)},   
  {no.~3},    
  {849--871}. 


\bibitem{LS}
  {S. N. Lakaev, S. M. Samatov},   
  \textit{Finiteness of the
    discrete spectrum of the Hamiltonian of a system of three arbitrary
    particles on a lattice}, 
  {Theor. Math. Phys.} 
  \textbf{129} 
  {(2001)},   
  {no.~3},    
  {1655--1668}. 


\bibitem{Lif89}
  {A. E. Lifschitz},   
  \textit{Magnetohydrodynamic and Spectral
    Theory},  
  {Developments in Electromagnetic Theory and
    Applications, vol. 4}, 
  {Kluwer Academic Publishers},    
  {Dordrecht},     
  {1989}.     


\bibitem{Mal-Min}
  {V. A. Malyshev, R. A. Minlos},   
  \textit{Linear Infinite-Particle Operators},  
  {Translations of Mathematical Monographs, vol.~143}, 
  {Amer. Math. Soc.},    
  {Providence, RI},     
  {1995. (Russian edition: Nauka, Moscow, 1994)}     




\bibitem{MS}
  {R. A. Minlos, H. Spohn},   
  \textit{The three-body problem in radioactive
    decay: the case of one atom and at most two photons}, 
  {Topics in
    Statistical and Theoretical Physics, Amer. Math. Soc. Transl.
    Ser. 2, Amer. Math. Soc., Providence, RI} 
  \textbf{177}    
  {(1996)},   
  {no.~2},    
  {159--193}. 








\bibitem{Mog}
  {A. I. Mogilner},   
  \textit{Hamiltonians in solid state physics
    as multiparticle discrete Schr\"{o}dinger operators: problems and
    results}, 
  {Advances in Sov. Math.} 
  \textbf{5}   
  {(1991)},   
  {139--194}. 


\bibitem{M08}
  {M. \'{E}. Muminov},   
  \textit{Finiteness of the
    discrete spectra of the Schr\"{o}dinger operator three particles on
    a lattice}, 
  {Theor. Math. Phys.} 
  \textbf{154}   
  {(2008)},   
  {no.~2},    
  {311--318}. 


\bibitem{MR}
  {M. I. Muminov, T. H. Rasulov},   
  \textit{The Faddeev equation and
    essential spectrum of a Hamiltonian in Fock Space}, 
  {Methods Funct. Anal. Topology} 
  \textbf{17}   
  {(2011)},   
  {no.~1},    
  {47--57}. 



\bibitem{Ras08}
  {T. Kh. Rasulov},   
  \textit{The Faddeev equation and the location of the
    essential spectrum of a model multi-particle operator}, 
  {Russian Math. (Iz. VUZ)} 
  \textbf{52}   
  {(2008)},   
  {no.~12},    
  {50--59}. 


\bibitem{Ras10-1}
  {T. Kh. Rasulov},   
  \textit{Study of the essential spectrum of a matrix operator}, 
  {Theor. Math. Phys.} 
  \textbf{164}   
  {(2010)},   
  {no.~1},    
  {883--895}. 


\bibitem{RS4}
  {M. Reed, B. Simon},   
  \textit{Methods of Modern Mathematical Pphysics. {\rm {IV}}: Analysis of Operators},  
  {Academic Press},    
  {New York---San Francisco---London},     
  {1978}.     



\bibitem{SSZ}
  {I. M. Sigal, A. Soffer, L. Zielinski},   
  \textit{On the spectral properties of Hamiltonians without conservation
    of the particle number}, 
  {J.~Math. Phys.} 
  \textbf{42}   
  {(2002)},   
  {no.~4},    
  {1844--1855}. 


\bibitem{Tha92}
  {B. Thaller},   
  \textit{The Dirac Equation. Texts and
    Monographs in Physics},  
  {Springer},    
  {Berlin},     
  {1992}.     




\bibitem{Uch69}
  {J. Uchiyama},   
  \textit{Finiteness of the number of discrete eigenvalues
    of the Schr\"{o}dinger operator for a three-particle system}, 
  {Publ. Res. Inst. Math. Sci. Kyoto Univ.} 
  \textbf{5}   
  {(1969)},   
  {51--63}. 




\bibitem{Yaf72}
  {D. R. Yafaev},   
  \textit{The discrete spectrum of the three-particle
    Schr\"{o}dinger operator}, 
  {Dokl. Akad. Nauk SSSR} 
  \textbf{206}   
  {(1972)},   
  {no.~1},    
  {68--70. (Russian)} 


\bibitem{Yaf75}
  {D. R. Yafaev},   
  \textit{The finiteness of the discrete spectrum of the
    three-particle Schr\"{o}dinger operator}, 
  {Theor. Math. Phys.} 
  \textbf{25}   
  {(1975)},   
  {no.~2},    
  {185--195}. 


\bibitem{YoM}
  {G. R. Yodgorov, M. \'{E}. Muminov},   
  \textit{Spectrum of a model
    operator in the perturbation theory of the essential spectrum}, 
  {Theor. Math. Phys.} 
  \textbf{144}   
  {(2005)},   
  {no.~3},    
  {1344--1352}. 



\bibitem{Zhis72}
  {G. M. Zhislin},   
  \textit{The finiteness of the discrete spectrum of the energy
    operators of many particle quantum systems}, 
  {Dokl. Akad. Nauk SSSR} 
  \textbf{207}   
  {(1972)},   
  {no.~1},    
  {25--28. (Russian); English transl. Soviet Math. Dokl.} 
  \textbf{13}   
  {(1972)},   
  {1445--1449}.   


\bibitem{Zor04}
  {V. A. Zorich},   
  \textit{Mathematical Analysis},  
  {Vol.~I}, 
  {Springer-Verlag},    
  {Berlin},     
  {2004}.     




\end{thebibliography}
\end{document}